\documentclass[dvips,sts]{imsart}

\usepackage{amsthm,amsmath, amssymb}
\RequirePackage[colorlinks,citecolor=blue,urlcolor=blue]{hyperref}

% use this package if hyperref and natbib is used:
\RequirePackage{hypernat}

% provide arXiv number if available:

% put your definitions there:

%
%\setlength{\textwidth}{16cm} \setlength{\topmargin}{-0.00in}
%\setlength{\oddsidemargin}{-0.1in}
%\setlength{\evensidemargin}{-0.1in} \setlength{\textheight}{21cm}

%\font\large=cmr10 scaled 1800
%\font\tenmath=msbm10
%\font\sevenmath=msbm7
%\font\fivemath=msbm5
%\newfam\mathfam \textfont\mathfam=\tenmath
%\scriptfont\mathfam=\sevenmath \scriptscriptfont\mathfam=\fivemath
%\def\math{\fam\mathfam}

\usepackage{amsmath, moreverb, hyperref}
\hypersetup{colorlinks=true,citecolor=blue}

\usepackage{color}
\usepackage{amsfonts, fancybox}
\usepackage{amssymb}
\usepackage[american]{babel}
\usepackage{graphicx, pstricks, pst-node, color}
\usepackage{psfrag}\usepackage{subfigure}
\usepackage{flafter, bm, dsfont}
\usepackage[section]{placeins}

\startlocaldefs

\newcommand{\cA}{\mathcal{A}}
\newcommand{\cB}{\mathcal{B}}

\newcommand{\cD}{\mathcal{D}}

\newcommand{\cG}{\mathcal{G}}
\newcommand{\cH}{\mathcal{H}}

\newcommand{\cN}{\mathcal{N}}

\newcommand{\cS}{\mathcal{S}}
\newcommand{\cT}{\mathcal{T}}

\newcommand{\bE}{\mathbf{E}}

\newcommand{\bP}{\mathbf{P}}

\newcommand{\del}{\delta}
\newcommand{\eps}{\varepsilon}

\newcommand{\SDP}{\mathsf{\mathop{SDP}}}

\newcommand{\tr}{\mathbf{Tr}}
\newcommand{\rk}{\mathbf{rank}}

%notes de Philippe

\newcommand{\bp}{\mathbf{p}}

\newcommand{\Sh}{\hat{\Sigma}}
\newcommand{\Dh}{\hat{\Delta}}
\newcommand{\Ph}{\hat{\Psi}}
\newcommand{\N}{\mathbb{N}}
\newcommand{\Sy}{\mathbf{S}}
\newcommand{\R}{\mathbf{R}}
\newcommand{\Pro}{\mathbf{P}\kern-0.12em}
\newcommand{\lkm}{\lambda_{\max}^k}
\newcommand{\D}{\mathcal{D}}

\newcommand{\fP}{\mathbf{P}}

\newcommand{\E}{\mathbb{E}}

\newcommand{\bl}{{\sf bl}}
\newcommand{\var}{{\sf V}}
\newcommand{\hvar}{{\sf\widehat{V}}_n}

\newcommand{\ubar}[1]{\underline{#1}}

\newcommand{\G}{\mathbb{G}}

\newenvironment{hypothesis}[1]{\begin{trivlist}
\item[\hskip \labelsep {\bfseries Hypothesis #1}]}{\end{trivlist}}

\makeatletter
\def\namedlabel#1#2{\begingroup
   \protect\def\@currentlabel{#2}%
   \label{#1}\endgroup
}
\makeatother

\newcommand{\BlackBox}{\rule{1.5ex}{1.5ex}}
\renewenvironment{proof}{\par\noindent{\bfseries\upshape
  Proof\ }}{\hfill\BlackBox\\[2mm]}

\newtheorem{theorem}{Theorem}
\newtheorem{lemma}[theorem]{Lemma}

\newtheorem{corollary}[theorem]{Corollary}
\newtheorem{definition}[theorem]{Definition}

\begin{document}

\begin{frontmatter}

% "Title of the paper"
\title{Computational Lower Bounds for Sparse PCA}
\runtitle{Computational Lower Bounds for Sparse PCA}

% indicate corresponding author with \corref{}
 \author{\fnms{Quentin} \snm{Berthet}\thanksref{t1, t2}\ead[label=e1]{qberthet@princeton.edu}\and \fnms{Philippe} \snm{Rigollet}\corref{}\thanksref{t2}\ead[label=e2]{rigollet@princeton.edu}}

\thankstext{t1}{Partially supported by a Gordon S. Wu Fellowship.}
\thankstext{t2}{Partially supported by NSF grants DMS-0906424, CAREER-DMS-1053987.}

 \affiliation{Princeton University}

 \address{{Quentin Berthet}\\
{Department of Operations Research} \\
{ and Financial Engineering}\\
{Princeton University}\\
{Princeton, NJ 08544, USA}\\
 \printead{e1}}
 \address{{Philippe Rigollet}\\
{Department of Operations Research} \\
{ and Financial Engineering}\\
{Princeton University}\\
{Princeton, NJ 08544, USA}\\
 \printead{e2}}

%
%\author{\fnms{???} \snm{???}\ead[label=e1]{???}}
%\address{\printead{e1}}
%\and
%\author{\fnms{???} \snm{???}\ead[label=e2]{???}}
%\address{\printead{e2}}
%\affiliation{???}

\runauthor{Berthet and Rigollet}

\begin{abstract}
\ In the context of sparse principal component detection, we bring evidence towards the existence of a statistical price to pay for computational efficiency. We measure the performance of a test by the smallest signal strength that it can detect and we propose a computationally efficient method based on semidefinite programming. We also prove that the statistical performance of this test cannot be strictly improved by any computationally efficient method. Our results can be viewed as complexity theoretic lower bounds conditionally on the assumptions that some instances of the planted clique problem cannot be solved in randomized polynomial time.
\end{abstract}

\begin{keyword}[class=AMS]
\kwd[Primary ]{62C20}
%\kwd{}
\kwd[; secondary ]{68Q17, 62H25}
\end{keyword}

\begin{keyword}
\kwd{Sparse PCA, Polynomial-time reduction, Planted clique}
\end{keyword}

\end{frontmatter}

\section{Introduction}

The modern scientific landscape has been significantly transformed over the past decade by the apparition of massive datasets.  From the statistical learning point of view, this transformation has led to a paradigm shift. Indeed, most novel methods consist in \emph{searching} for sparse structure in datasets, whereas \emph{estimating} parameters over this structure is now a fairly well understood problem. It turns out that most interesting structures have a combinatorial nature, often leading to computationally hard problems. This has led researchers to consider various numerical tricks, chiefly convex relaxations, to overcome this issue. While these new questions have led to fascinating interactions between learning and optimization, they do not always come with satisfactory answers from a statistical point of view. The main purpose of this paper is to study one example, namely sparse principal component detection, for which current notions of statistical optimality should also be shifted, along with the paradigm.

%Arguably the first and foremost problem studied under the new paradigm was sparse linear regression, where the dimension of the problem is much larger than the sample size but it is reasonable to believe that only a small number of covariates is sufficient to estimate the regression function accurately. Searching for this small subset of covariates is, in general, intractable but can formally be studied from a statistical standpoint. Various methods based on $\ell_0$-penalization \citep{BirMas01, BunTsyWeg07c} or exponential weights \citep{RigTsy11} can be proved to be optimal in a minimax sense under virtually no assumption. Unfortunately, $\ell_0$-penalization can be proved to be {\sf NP}-hard in general and nothing is known about the complexity of implementing exponential weights. Yet, under rather strong (RIP-type) assumptions, \cite{CanTao04}, followed by many others, proved that a natural $\ell_1$ relaxation to the problem achieves the same rate up to a small logarithmic term. This somewhat magical result has spurred extraordinary interest in many scientific communities, in particular statistics, where similar ideas were applied to a variety of problems \citep[see the recent book by][for a limited overview of these results]{BuhGee11}. What happens if these assumptions are not satisfied? Some results exist for the Lasso estimator for example  \cite[see,][Theorem~4.1]{RigTsy11}. They give rates that are optimal with respect to the $\ell_1$ norm but they are largely suboptimal for signals that are sparser.

Sparse detection problems where one wants to detect the presence of a sparse structure in noisy data falls in this line of work. There has been recent interest in detection problems of the form signal-plus-noise, where the signal is a vector with combinatorial structure \cite{AddBroDev09, AriCanPla11, AriVer13} or even a matrix \cite{ButIng13,SunNob13, KolBalRin11,BalKolRin11}.  The matrix detection problem was pushed beyond the signal-plus-noise model towards more complicated dependence structures  \cite{AriBubLug12, AriBubLug13, BerRig12}. One contribution of this paper is to extend these results to more general distributions.

For matrix problems, and in particular sparse principal component (PC) detection, some computationally efficient methods have been proposed, but they are not proven to achieve the optimal detection levels. \cite{JohLu09, CaiMa12,Ma13} suggest heuristics for which detection levels are unknown and \cite{BerRig12} prove suboptimal detection levels for a natural semidefinite relaxation developed in \cite{dAsGhaJor07} and an even simpler, efficient, dual method called Minimum Dual Perturbation (MDP). More recently, \cite{dAsBacGha12} developed another semidefinite relaxation for sparse PC detection that performs well only outside of the high-dimensional, low sparsity regime that we are interested in. Note that it follows from the results of \cite{AmiWai08} that the former semidefinite relaxation is optimal if it has a rank-one solution. Unfortunately, rank-one solutions can only be guaranteed at suboptimal detection levels. This literature hints at a potential cost for computational efficiency in the sparse PC detection problem.

Partial results were obtained in~\cite{BerRig12} who proved that their bound for MDP and SDP are unlikely to be improved, as otherwise they would lead to randomized polynomial time algorithms for instances of the planted clique problem that are believed to be hard. This result only focuses on a given testing method, but suggests the existence of an intrinsic gap between the optimal rates of detection and what is statistically achievable in polynomial time. Such phenomena are hinted at in \cite{ChaJor13} but their these results focus on the behavior of upper bounds. Closer to our goal, is  \cite{ShaShaTom12} that exhibits a statistical price to pay for computational efficiency. In particular, their derive a computational theoretic lower bound using a much weaker conjecture than the hidden clique conjecture that we employ here, namely the existence of one-way permutations. This conjecture is widely accepted and is the basis of many cryptographic protocols. Unfortunately, the lower bound holds only for a synthetic classification problem that is somewhat tailored to this conjecture.  It still remains to fully describe a theory, and to develop lower bounds on the statistical accuracy that is achievable in reasonable computational time for natural problems. This article aims to do so for a general sparse PC detection problem. 

This paper is organized in the following way. The sparse PC detection problem is formally described in Section~\ref{SEC:pdesc}. Then, we show in Section~\ref{SEC:optesting} that our general detection framework is a natural extension of the existing literature, and that all the usual results for classical detection of sparse PC are still valid. Section~\ref{SEC:ptesting} focuses on testing in polynomial time, where we study detection levels for the semidefinite relaxation developed of \cite{dAsGhaJor07}  (It trivially extends to the MDP statistic of \cite{BerRig12}). These levels are shown to be unimprovable using computationally efficient methods  in Section~\ref{SEC:CTLB}. This is achieved by introducing a new notion of optimality that takes into account computational efficiency. Practically, we reduce the planted clique problem, conjectured to be computationally hard already in an~\emph{average-case} sense (i.e. over most random instances) to obtaining better rates for sparse PC detection. %Appendix~\ref{app:notation} contains notation. 

{\sc Notation}.
The space of $d \times d$ symmetric real matrices is denoted by $\Sy_d$. We write $Z \succeq 0$ whenever $Z$ is semidefinite positive. We denote by $\N$ the set of nonnegative integers and define $\N_1=\N\setminus\{0\}$.

The elements of a vector $v \in \R^d$ are denoted by $v_1, \ldots, v_d$ and similarly, a matrix $Z$ has element $Z_{ij}$ on its $i$th row and $j$th column. For any $q > 0$,   $|v|_q$ denotes the $\ell_q$ ``norm" of a vector $v$ and is defined by $|v|_q=(\sum_{j}|v_j|^q)^{1/q}$. Moreover, we denote by $|v|_0$ its so-called $\ell_0$ ``norm", that is its number of nonzero elements.  Furthermore, by extension, for $Z \in \Sy_d$, we denote by $|Z|_q$ the $\ell_q$ norm of the vector formed by the entries of $Z$. We also define for $q\in [0,2)$ the set $\cB_q(R)$ of unit vectors within the $\ell_q$-ball of radius $R>0$
$$\cB_q(R) = \{v \in \R^d \,:\, |v|_2=1\,, |v|_q \le R\}\, .$$
%We will only be using $q \in \{0,1\}$ in this paper.

For a finite set $S$, we denote by $|S|$ its cardinality. We also write $A_S$ for the $|S|\times|S|$ submatrix with elements $(A_{ij})_{i,j \in S}$, and $v_S$ for the vector of $\R^{|S|}$ with elements $v_i$ for $i \in S$. The vector $\mathbf{1}$ denotes a vector with coordinates all equal to $1$. If a vector has an index such as $v_i$, then we use $v_{i,j}$ to denote its $j$th element. 

The vectors $e_i$ and matrices $E_{ij}$ are the elements of the canonical bases of $\R^d$ and $\R^{d\times d}$. 
We also define $\cS^{d-1}$ as the unit Euclidean sphere of $\R^d$ and $\cS^{d-1}_S$ the set of vectors in $\cS^{d-1}$ with support $S \subset\{1, \ldots, d\}$. The identity matrix in $\R^d$ is denoted by~$I_d$.

A Bernoulli random variable with parameter $p \in [0,1]$ takes values $1$ or $0$ with probability $p$ and $1-p$ respectively. A Rademacher random variable takes values $1$ or $-1$ with probability $1/2$. A binomial random variable, with distribution $\cB(n,p)$ is the sum of $n$ independent Bernoulli random variables with identical parameter $p$. A hypergeometric random variable, with distribution $\cH(N,k,n)$ is the random number of successes in $n$ draws from a population of size $N$ among which are $k$ successes, without replacement. The total variation norm, noted $\| \cdot \|_{\sf TV}$ has the usual definition.

The trace and rank functionals are denoted by $\tr$ and $\rk$ respectively and have their usual definition.  We denote by $T^c$ the complement of a set $T$. Finally, for two real numbers $a$ and $b$, we write $a \wedge b=\min(a,b)$, $a \vee b = \max(a,b)$, and $a_+ = a \vee 0 \,$.

\section{Problem description}
\label{SEC:pdesc}
Let $X \in \R^d$ be a centered random vector with unknown distribution $\fP$ that has finite second moment along every direction. The first principal component for $X$ is a direction $v\in  \cS^{d-1}$ such that the variance $\var(v)=\bE[(v^\top X)^2]$ along direction $v$ is larger than in any other direction. If no such $v$ exists, the distribution of $X$ is said to be \emph{isotropic}. The goal of sparse principal component detection is to test whether $X$ follows an isotropic distribution $\fP_0$ or a distribution $\fP_v$ for which there exists a sparse $v \in \cB_0(k)$, $k \ll d$, along which the variance is large.  Without loss of generality, we assume that under the isotropic distribution $\fP_0$, all directions have unit variance and under $\fP_v$, the variance along $v$ is equal to $1+\theta$ for some positive $\theta$. Note that since $v$ has unit norm, $\theta$ captures the signal strength. 

To perform our test, we observe $n$ independent copies $X_1,\ldots,X_n$ of $X$. For any direction $u\in \cS^{d-1}$, define the empirical variance along $u$ by
$$
\hvar(u)=\frac1n\sum_{i=1}^n \big(u^\top X_i \big)^2\,.
$$
Clearly the concentration of $\hvar(u)$ around $\var(u)$ will have a significant effect on the performance of our testing procedure. If, for any $u \in \cS^{d-1}$, the centered random variable $(u^\top X)^2-\E[(u^\top X)^2]$ satisfies the conditions for Bernstein's inequality (see, e.g., \cite{Mas07}, eq. (2.18), p.24) under both $\fP_0$ and $\fP_v$, then, up to numerical constants, we have
\begin{align}
\sup_{u \in \cS^{d-1}}\fP_0^{\otimes n}\Big(\big|\hvar(u)-1\big|>  4\sqrt{\frac{\log(1/\nu)}{n}} + 4 \frac{\log(1/\nu)}{n} \Big) &\le \nu\,,  &\quad\forall \nu>0\,,&\label{EQ:H0hpb}\\
\fP_v^{\otimes n} \Big(\hvar(v)-(1+\theta) < - 2 \sqrt{\frac{2\theta k \log(2/\nu)}{n}} - 4 \frac{\log(2/\nu)}{n} \Big)  &\le \nu\,, &\quad \forall \nu>0, \ v \in \cB_0(k)&\,.\label{EQ:H1hpb}
\end{align}
Such inequalities are satisfied  if we assume that $\fP_0$ and $\fP_v$ are sub-Gaussian distributions for example.
%
%Rather than making assumptions on the distribution of $X$, we make willd
%
%If there is no privileged direction, such as when all the coefficients are independent, high probability deviation bounds, such as those derived in \cite{Vers10,LauMas00} guarantee that for all unit directions, the explained variance is tightly centered around its expected value. When all the coefficients have variance 1, it yields for such a distribution $\fP_0$, for all $u \in \cS_2(1)$, and $\delta>0$
%\begin{eqnarray}
%\label{EQN:H0hpb1}
%&&\fP_0 \Big(\sum_{i=1}^n\big(u^\top X_i \big)^2 > 1+ 2\sqrt{\frac{\log(1/\delta)}{n}} + 2 \frac{\log(1/\delta)}{n} \Big) \le \delta\\
%\label{EQN:H0hpb2}
%&&\fP_0\Big(\sum_{i=1}^n\big(u^\top X_i \big)^2 < 1- 2\sqrt{\frac{\log(1/\delta)}{n}} - 2 \frac{\log(1/\delta)}{n} \Big) \le \delta\, . 
%\end{eqnarray}
%
%On the contrary, if there exists a sparse principal component, there exists , for some $k<p$, $v \in \cB_0(k)$ that explains with high probability a level of variance close to $1+\theta$, for some $\theta>0$. It yields for such a distribution $\fP_1$, for all $\delta>0$
%\begin{equation}
%\label{EQN:H1hpb}
%\fP_1 \Big(\sum_{i=1}^n\big(v^\top X_i \big)^2 < 1+ \theta - 2 \sqrt{\frac{k\log(1/\delta)}{n}} \Big) \le \delta\,.
%\end{equation}
Rather than specifying such an ad-hoc assumption, we define the following sets of distributions under which the fluctuations of $\hvar$ around $\var$ are of the same order as those of sub-Gaussian distributions. As a result, we formulate our testing problem on the unknown distribution $\fP$ of $X$ as follows
\begin{eqnarray*}
H_0&:& \fP\in \cD_0 = \big\{ \fP_0 \,:\,  \eqref{EQ:H0hpb}  \ \text{holds} \big\} \\
H_1&:& \fP\in\cD_1^k(\theta) = \bigcup_{v \in \cB_0(k)}\big\{ \fP_v \,:\,   \eqref{EQ:H1hpb}\ \text{holds} \big\}\, .
\end{eqnarray*}
Note that distributions in $ \cD_0$ and $\cD_1^k(\theta)$ are implicitly centered at zero.

We argue that interesting testing procedures should be robust and thus perform well uniformly over these distributions. In the rest of the paper, we focus on such procedures. The existing literature on sparse principal component testing, particularly in \cite{BerRig12} and \cite{AriBubLug12} focuses on multivariate normal distributions, yet only relies on the sub-Gaussian properties of the empirical variance along unit directions. Actually, all the distributional assumptions made in \cite{VuLei12,AriBubLug12} and \cite{BerRig12} are particular cases of these hypotheses. We will show that concentration of the empirical variance as in~\eqref{EQ:H0hpb} and \eqref{EQ:H1hpb} is sufficient to derive the results that were obtained under the sub-Gaussian assumption. 
%We now introduce a flexible notion of optimality that  allows us to characterize the best possible performance of a computationally efficient test.

Recall that a test for this problem is a family $\psi=\{\psi_{d,n,k}\}$ of $\{0,1\}$-valued measurable functions of the data $(X_1, \ldots, X_n)$. Our goal is to quantify the smallest signal strength $\theta>0$ for which there exists a test $\psi$ with maximum test error bounded by $\delta>0$, i.e.,
$$
\sup_{\begin{subarray}{l}\fP_0 \in \cD_0\\\fP_1 \in \cD_1^k (\theta)\end{subarray}} \Big\{ \fP_0^{\otimes n}(\psi=1) \vee \fP_1^{\otimes n}(\psi=0) \Big \} \le \del \, .
$$
To call our problem ``sparse", we need to assume somehow that $k$ is rather small. Throughout the paper, we fix a tolerance $0<\delta<1/3$ (e.g., $\delta=5\%$) and focus on the case where the parameters are in the \emph{sparse regime} $R_0 \subset \N_1^3$ of positive integers defined by
$$
R_0=\Big\{(d,n,k) \in \N_1^3\,:\, 15\sqrt{\frac{k\log (6ed/\delta)}{n}}\le1\,, k \le d^{0.49}\}\,.
$$
Note that the constant $0.49$ is arbitrary and can be replaced by any constant $C<0.5$.\begin{definition}
Fix a set of parameters $R \subset R_0$ in the sparse regime. Let $\cT$ be a set of tests.  A function $\theta^*$ of $(d,n,k) \in R$ is called {\bf optimal rate of detection over the class}~$\cT$ if for any $(d,n,k)\in R$, it holds:
\begin{itemize}
\item[(i)] there exists a test $\psi \in \cT$ that  discriminates between $H_0$ and $H_1$  at level $\bar c\theta^*$ for some constant $\bar c>0$, i.e., for any $\theta\ge \bar c \theta^*$
$$
\sup_{\begin{subarray}{l}\fP_0 \in \cD_0\\\fP_1 \in \cD_1^k (\theta)\end{subarray}} \Big\{ \fP_0^{\otimes n}(\psi=1) \vee \fP_1^{\otimes n}(\psi=0) \Big \} \le \del \,.
$$
In this case we say that $\psi \in \cT$  discriminates between $H_0$ and $H_1$  at rate~$\theta^*$.
\item[(ii)] for any test $\phi \in \cT$, there exists a constant $\ubar{c}_\phi>0$ such that $\theta\le  \ubar{c}_\phi \theta^*$ implies
$$
\sup_{\begin{subarray}{l}\fP_0 \in \cD_0\\\fP_1 \in \cD_1^k (\theta)\end{subarray}}\Big\{ \fP_0^{\otimes n}(\phi=1) \vee \fP_1^{\otimes n}(\phi=0) \Big \} \ge \delta\,.
$$
\end{itemize}
Moreover, if both~(i) and~(ii) hold, we say that $\psi$ is an optimal test over the class $\cT$.
%
% We say that a  test $\psi$ \emph{discriminates} between $H_0$ and $H_1$  over $R$ at rate $\theta$ if there exists a constant $\bar c>0$ such that
%\begin{equation}
%\label{EQ:UBminimax}
% \sup_{(d,n,k)\in R}\sup_{\begin{subarray}{l}\fP_0 \in \cD_0\\\fP_1 \in \cD_1^k (\bar c\theta)\end{subarray}} \Big\{ \fP_0(\psi=1) \vee \fP_1(\psi=0) \Big \} \le \del \, .
%\end{equation}
%Moreover, we say that a test is optimal over the class $\cT$ of uniformly over the set of parameters $(d,n,k) \in R \subset \N^3$ if (i) it satisfies~\eqref{EQ:UBminimax} and (ii)
%%Alternatively, we  say that a class $\cT$ of tests fails to discriminate between $H_0$ and $H_1$ for parameters $(d,n,k) \in R \subset \N^3$ if
%$$
%\inf_{\phi\in \cT} \sup_{(d,n,k)\in R} \sup_{\begin{subarray}{l}\fP_0 \in \cD_0\\\fP_1 \in \cD_1^k (\ubar{c} \theta)\end{subarray}}\Big\{ \fP_0(\phi=1) \vee \fP_1(\phi=0) \Big \} \ge \frac 12 - \nu
%$$
%for some positive constant $\ubar{c}$ that may depend on $\nu$. In this case, $\theta$ is called \emph{optimal rate of detection} over the class~$\cT$ for parameters in $R$.
\end{definition}
This an adaptation of the usual notion of statistical optimality, when one is focusing on the class of measurable functions, for $\psi_{d,n,k}:(X_1, \ldots, X_n) \mapsto \{0,1\}$, also known as minimax optimality \cite{Tsy09}. In order to take into account the asymptotic nature of some classes of statistical tests (namely, those that are computationally efficient), we allow the constant $\ubar{c}_\phi$ in {\it (ii)} to depend on the test.
%When there is no restriction on the testing method, i.e., when $\cT$ is the set of all sequences $\{\psi_{d,n,k}\}$ of measurable functions $\psi_{d,n,k}:(X_1, \ldots, X_n) \mapsto \{0,1\}$, the definition is equivalent to the usual notion of statistical optimality. It is also known as minimax optimality \citep[see, e.g.,][for an introduction]{Tsy09}. We will be particularly interested in the case where $\cT$ contains only tests that can be computed efficiently.
%We will develop these notions in section~\ref{SEC:CTLB}, particularly in definition~\ref{DEFN:ctlb}.  
%$$\inf_{\psi \in \cT}\sup_{\substack{\fP_0 \in \cD_0 \\\fP_1 \in \cD_1^k (\theta)}} \Big\{ \fP_0(\psi=1) \vee \fP_1(\psi=0) \Big \} \ge \frac 12 - \nu\, .$$ 
\section{Statistically optimal testing}
\label{SEC:optesting}
We focus first on the traditional setting where   $\cT$ contains all sequences $\{\psi_{d,n,k}\}$ of tests.
%measurable functions $\psi_{d,n,k}:(X_1, \ldots, X_n) \mapsto \{0,1\}$.

Denote by $\Sigma=\E[XX^\top]$ the covariance matrix of $X$ and by $\Sh$ its empirical counterpart:
\begin{equation}
\label{EQN:sh}
\Sh= \frac 1n \sum_{i=1}^n X_i X_i^\top \, .
\end{equation}
Observe that   $\var(u)=u^\top \Sigma \,u$ and $\hvar(u)=u^\top \Sh u$, for any $u \in \cS^{d-1}$. Maximizing $\hvar(u)$ over $\cB_0(k)$ gives the largest empirical variance along any $k$-sparse direction. It is also known as the $k$-sparse eigenvalue of $\Sh$ defined by
\begin{equation}
\label{EQ:deflkm}
\lkm(\Sh) =  \max_{u \in \cB_0(k)} u^\top \Sh\, u\, .
\end{equation}
%
%%We provide a positive result, an upper bound for the error risk at a specific detection level. We use the sparse eigenvalue statistic, optimum of the sparse PCA problem defined in \cite{ZouHasTib06} or \cite{JohLu09}.
%
%\begin{definition}
%\label{DEFN:lkm}
%For a semidefinite positive matrix $A\succeq 0$, we note $\lkm(A)$ the $k$-sparse eigenvalue
%$$\lkm(A) = \max_{\substack{|x|_2=1\\|x|_0\le k}} x^\top A\, x =  \max_{x \in \cB_0(k)} x^\top A\, x\, .$$
%\end{definition}
The following theorem describes the performance of the test
\begin{equation}
\label{EQ:defpsi}
\psi_{d,n,k} = \mathbf{1}\{\lkm(\Sh)>1+\tau\}\,, \quad \tau >0\, .
\end{equation}
\begin{theorem}
\label{TH:uplkm}
Assume that $(d,n,k) \in R_0$ and define
$$
\bar\theta= 15 \sqrt{\frac{k \log\big(\frac{6ed}{k\delta}\big)}{n}}\,.
% + \log(1/\delta)}{n}} + 4 \frac{k \log(9ed/k) + \log(1/\delta)}{n}+2 \sqrt{ \frac{k\log(1/\delta)}{n}}\,.
$$ 
Then, for $\bar\theta<\theta < 1$, the test $\psi$ defined in~\eqref{EQ:defpsi} with threshold $
\tau=8 \sqrt{\frac{k \log\big(\frac{6ed}{k\delta}\big)}{n}}$ ,
satisfies
$$
\sup_{\begin{subarray}{l}\fP_0 \in \cD_0\\\fP_1 \in \cD_1^k (\theta)\end{subarray}} \Big\{ \fP_0^{\otimes n}(\psi=1) \vee \fP_1^{\otimes n}(\psi=0) \Big \} \le \del \, .
$$
\end{theorem}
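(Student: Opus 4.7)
The plan is to control Type~I and Type~II errors separately, using \eqref{EQ:H0hpb} for the null and \eqref{EQ:H1hpb} for the alternative.

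\textbf{Type~I (null).} Since $\lkm(\Sh)-1=\sup_{u\in\cB_0(k)} u^\top(\Sh-I)u$ and the isotropy hypothesis under $\fP_0$ gives $\var(u)=1$ for every $u\in\cS^{d-1}$, I need a uniform deviation bound for $\hvar(u)-1$ over $\cB_0(k)$. I would first reduce to a union over supports: any $u\in\cB_0(k)$ lies in $\cS^{d-1}_S$ for some $|S|\le k$, and there are at most $\binom{d}{k}\le(ed/k)^k$ such supports. For each fixed $S$, a standard $\frac14$-net argument on the unit sphere of $\R^{|S|}$ yields a net $N_S$ of cardinality at most $9^k$ such that $\sup_{u\in\cS^{d-1}_S}|u^\top(\Sh-I)u|\le 2\max_{u\in N_S}|u^\top(\Sh-I)u|$. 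Applying \eqref{EQ:H0hpb} at each point of $\bigcup_S N_S$ with $\nu$ chosen so that $\nu\cdot(ed/k)^k\cdot 9^k\le\delta$, i.e.\ $\log(1/\nu)\asymp k\log(6ed/(k\delta))$, and taking a union bound, delivers
\[
\lkm(\Sh)\le 1+C_1\sqrt{\tfrac{k\log(6ed/(k\delta))}{n}}+C_2\tfrac{k\log(6ed/(k\delta))}{n}
\]
with probability at least $1-\delta$ under $\fP_0^{\otimes n}$. The sparse regime hypothesis $15\sqrt{k\log(6ed/\delta)/n}\le 1$ makes the quadratic term dominated by the linear term, so the constants can be tuned to match the stated threshold $\tau=8\sqrt{k\log(6ed/(k\delta))/n}$.

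\textbf{Type~II (alternative).} Fix $v\in\cB_0(k)$ and $\fP_v\in\cD_1^k(\theta)$. Because $v$ is itself $k$-sparse, $\lkm(\Sh)\ge \hvar(v)$, so it suffices to show that $\hvar(v)>1+\tau$ with $\fP_v^{\otimes n}$-probability at least $1-\delta$. Applying \eqref{EQ:H1hpb} with $\nu=\delta$ gives, on an event of probability $\ge 1-\delta$,
\[
\hvar(v)\ge 1+\theta-2\sqrt{\tfrac{2\theta k\log(2/\delta)}{n}}-4\tfrac{\log(2/\delta)}{n}.
\]
I would then verify that whenever $\theta\ge\bar\theta$ and $\theta\le 1$, the right side exceeds $1+\tau$. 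Using $\theta\le 1$ to bound $\sqrt\theta\le 1$ in the deviation term, and using the sparse-regime inequality $15\sqrt{k\log(6ed/\delta)/n}\le 1$ to absorb the $\log(2/\delta)/n$ term into a multiple of $\sqrt{k\log(6ed/(k\delta))/n}$, the requirement reduces to a purely arithmetic comparison between the constant $15$ appearing in $\bar\theta$ and $8$ plus the slack produced above. Choosing the splits carefully makes $15$ sufficient.

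\textbf{Anticipated obstacle.} The main technical point is not either tail bound in isolation but bookkeeping the constants so that the same numerical values stated in the theorem ($8$ in $\tau$ and $15$ in $\bar\theta$) work. In the Type~I part this requires choosing the net resolution and the size of $\nu$ so that the combinatorial factor $(ed/k)^k\cdot 9^k$ and the factor of $2$ lost in the discretization merge into the $\log(6ed/(k\delta))$ appearing in $\tau$; in the Type~II part one must absorb both the $\sqrt{\theta k/n}$ and $\log(1/\delta)/n$ deviations plus $\tau$ into $\theta$, using only $\theta\le 1$ and the sparse-regime inequality. Everything else is standard concentration and union bounds.
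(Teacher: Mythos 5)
Your argument reproduces the paper's proof: the Type~I bound is exactly the content of the paper's Lemma~\ref{LEM:lkmH0} (union over $\binom{d}{k}\le(ed/k)^k$ supports, a $1/4$-net of size $9^k$ per support with the factor-of-$2$ discretization loss, and \eqref{EQ:H0hpb} at each net point), and the Type~II step via $\lkm(\Sh)\ge\hvar(v)$ together with \eqref{EQ:H1hpb} at $\nu=\delta$ is likewise identical, with the same use of $\theta\le 1$ and the sparse-regime inequality to tame the quadratic terms. The only difference is presentational (inline versus factored into a lemma), so the proposal is correct and follows essentially the same route.
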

\begin{proof}
Define $\tau_1 = 7 \sqrt{ k \log(2/\delta)/n}$.
%\begin{eqnarray*}
%\tau_0 = 8 \sqrt{\frac{k \log\big(\frac{6ed}{k\delta}\big)}{n}} \,%+ 4 \frac{k \log(9ed/k) + \log(1/\delta)}{n}\\
%\qquad \tau_1 = 7 \sqrt{\frac{k \log(2/\delta)}{n}}\,.
%\end{eqnarray*}
For $\fP_1 \in \cD^k_1(\theta)$, by~\eqref{EQ:H1hpb}, and for $\fP_0 \in \cD_0$, using  Lemma~\ref{LEM:lkmH0}, we get
$$
\fP_0^{\otimes n}\Big(\lkm(\Sh) \ge 1+ \tau \Big) \le \delta\,, \quad \fP_1^{\otimes n}\Big(\lkm(\Sh) \le 1+ \theta - \tau_1\Big) \le \delta \,.
$$
To conclude the proof, observe that  $\tau\le \bar \theta-\tau_1<\theta-\tau_1$.
%$$
%
%$$
%Define the following test
%$$\psi_{d,n,k} = \mathbf{1}\{ \lkm(\Sh) \ge 1+ \tau_0\}\, .$$
%Since $\bar \theta_\del = \tau_0 + \tau_1$, $\theta >\bar \theta_\del$ implies $1+\theta-\tau_1 > 1+\bar \theta_\del -\tau_1 = 1+\tau_0$, which yields the desired result.
%
% shows that this result is essentially optimal, in the sense that for all $\nu>0$, there exists a $\underline{\theta}_\nu$ of a similar order than $\bar\theta_\del$, such that for $\theta < \underline{\theta}_\nu$, detection with error probability lower than $1/2-\nu$ is impossible for all tests. This is consistent with the %existing results on Gaussian coefficients.
\end{proof}
The following lower bound follows directly from \cite{BerRig12}, Theorem~5.1 and holds already for Gaussian distributions.
\begin{theorem}
\label{TH:infminimax}
For all $\eps>0$, there exists  a constant $C_\eps>0$  such that if
$$
\theta < \underline{\theta}_\eps=\sqrt{\frac{ k\log \left(C_\eps d/k^2+ 1\right)}{n}} \,,
$$
any test $\phi$ satisfies
\begin{equation*}
\sup_{\begin{subarray}{l}\fP_0 \in \cD_0\\\fP_1 \in \cD_1^k (\theta)\end{subarray}} \big\{ \fP_{0}^{\otimes n}(\phi=1) \vee \fP_1^{\otimes n}(\phi=0) \big\} \ge \frac 12 - \eps \, .
\end{equation*}
\end{theorem}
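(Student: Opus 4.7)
The plan is to deduce this lower bound by restricting to the Gaussian subclass and running the standard Le Cam / second-moment argument that underlies Theorem 5.1 of~\cite{BerRig12}. First I would verify that the Gaussian model is contained in our framework: the isotropic Gaussian $\cN(0,I_d)$ lies in $\cD_0$ and the spiked Gaussian $\cN(0,I_d+\theta v v^\top)$ lies in $\cD_1^k(\theta)$ whenever $v \in \cB_0(k)\cap\cS^{d-1}$, because empirical variances along a fixed direction are quadratic forms in Gaussians and hence sub-exponential, which delivers~\eqref{EQ:H0hpb} and~\eqref{EQ:H1hpb}. This reduces the problem to proving the lower bound for the Gaussian spiked covariance model.

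For that Gaussian problem, I would use a Bayesian reduction. Put the uniform prior $\pi$ on vectors of the form $v_S = \mathbf{1}_S/\sqrt{k}$ with $S$ a uniformly random $k$-subset of $\{1,\ldots,d\}$, and denote $\bar\fP_1 = \int \fP_v^{\otimes n}\,d\pi(v)$. Le~Cam's inequality gives
$$
\sup_{\fP_1 \in \cD_1^k(\theta)}\bigl\{\fP_0^{\otimes n}(\phi=1) \vee \fP_1^{\otimes n}(\phi=0)\bigr\} \ge \tfrac{1}{2}\bigl(1 - \|\bar\fP_1 - \fP_0^{\otimes n}\|_{\sf TV}\bigr),
$$
and the second-moment bound $2\|\bar\fP_1-\fP_0^{\otimes n}\|_{\sf TV}^2 \le \chi^2(\bar\fP_1,\fP_0^{\otimes n})$ reduces everything to controlling a chi-squared. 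The Gaussian closed form yields
$$
1 + \chi^2(\bar\fP_1,\fP_0^{\otimes n}) = \mathbb{E}_{S,S'}\!\Bigl[(1 - \theta^2 \langle v_S,v_{S'}\rangle^2)^{-n/2}\Bigr] = \mathbb{E}_H\!\left[\left(1 - \frac{\theta^2 H^2}{k^2}\right)^{-n/2}\right],
$$
where $S,S'$ are independent uniform $k$-subsets and $H = |S\cap S'| \sim \cH(d,k,k)$.

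The key remaining step is to show that for $\theta < \underline\theta_\eps$ this chi-squared is at most $4\eps^2$. The main obstacle is that the integrand blows up when $H$ is close to $k$, even though $H$ has mean only $k^2/d$. The standard route is to split according to the value of $H$: on the bulk $\{H \le H_0\}$ use $(1-x)^{-n/2} \le \exp(c n x)$ for $x$ bounded away from $1$, while on the tail $\{H > H_0\}$ rely on the hypergeometric tail estimate $\fP(H=h) \le (k^2/d)^h$ to beat the exponentially growing integrand. Optimizing the cutoff $H_0$ and exploiting the sparse regime assumption $k\le d^{0.49}$ (which guarantees $k^2/d$ is much smaller than one) produces a geometric sum that telescopes to a bound of the form $\exp(C n\theta^2 / k) \cdot$\,(small), matching the threshold $\theta^2 \lesssim (k/n)\log(d/k^2 + 1)$. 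Choosing the constant $C_\eps$ appropriately in the definition of $\underline\theta_\eps$ makes the chi-squared smaller than $4\eps^2$, yielding the claimed lower bound of $1/2 - \eps$. Since this calculation is exactly the one performed in the proof of~\cite{BerRig12}, Theorem~5.1, one may simply cite that result after the Gaussian reduction above to conclude.
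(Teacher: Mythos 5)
Your proposal takes essentially the same route as the paper's own proof, which is a one-line citation noting that the bound "follows directly from \cite{BerRig12}, Theorem~5.1 and holds already for Gaussian distributions." You usefully unpack the internals of that cited argument (uniform prior over $k$-sparse sign vectors, Le~Cam's method, the chi-squared/hypergeometric computation, the bulk-versus-tail split) and make explicit the verification that the isotropic and spiked Gaussians belong to $\cD_0$ and $\cD_1^k(\theta)$, but in the end you invoke the same external result, so the logical content matches the paper's.
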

%\begin{proof}
%By theorem 5.1 in \cite{BerRig12}  the result holds for 
%$$\fP_0 \in \big\{\cN(0,I_d)\big\} \quad \text{and} \quad \fP_1 \in \bigcup_{v \in \cB_0(k)} \big\{\cN(0,I_d + \theta vv^\top)\big\}\, .$$
%It has been noted that these are respectively subsets of $\cD_0$ and $\cD_1^k(\theta)$, which yields the desired result.
%\end{proof}
Theorems~\ref{TH:uplkm} and~\ref{TH:infminimax} imply the following result.
\begin{corollary}
\label{COR:minimax}
The sequence  
$$
\theta^*=\sqrt{\frac{k\log d}{n}}\,,\quad  (d,n,k) \in R_0\, ,
$$
is the optimal rate of detection over the class of \emph{all tests}.
\end{corollary}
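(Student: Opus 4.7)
The plan is to combine the upper bound from Theorem~\ref{TH:uplkm} with the lower bound from Theorem~\ref{TH:infminimax} and check that both rates coincide with $\theta^{*}=\sqrt{k\log d/n}$ up to multiplicative constants, using the sparse regime restriction $k\le d^{0.49}$ to control the logarithmic factors.

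For part~(i) of the definition, I would invoke Theorem~\ref{TH:uplkm} with the test $\psi$ from \eqref{EQ:defpsi}. It gives error at most $\delta$ as soon as $\theta\ge\bar\theta=15\sqrt{k\log(6ed/(k\delta))/n}$. So it suffices to argue that $\bar\theta\le \bar c\,\theta^{*}$ for some absolute constant $\bar c>0$ on $R_0$. Since $k\ge 1$ and $\delta$ is a fixed constant, we have $\log(6ed/(k\delta))\le \log d+\log(6e/\delta)$, and because $k\le d^{0.49}\le d$ in the sparse regime, $\log d$ grows at least like a constant as soon as $d$ is not trivially small. Thus $\log(6ed/(k\delta))\lesssim \log d$, yielding $\bar\theta\lesssim \theta^{*}$ as required.

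For part~(ii), I would choose $\eps<1/2-\delta$ (which is possible since $\delta<1/3<1/2$) and apply Theorem~\ref{TH:infminimax}. For any test $\phi$ at level $\theta<\underline\theta_\eps=\sqrt{k\log(C_\eps d/k^2+1)/n}$, the max of the two error probabilities is at least $1/2-\eps>\delta$, which is exactly~(ii) with $\underline c_\phi$ chosen so that $\underline c_\phi\theta^{*}\le\underline\theta_\eps$. Here the crucial point is the lower estimate $\log(C_\eps d/k^2+1)\gtrsim \log d$. This is where the sparse regime hypothesis enters: since $k\le d^{0.49}$, we have $d/k^2\ge d^{1-0.98}=d^{0.02}$, hence $\log(C_\eps d/k^2+1)\ge 0.02\log d$ (up to a constant depending on $C_\eps$). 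Consequently $\underline\theta_\eps\gtrsim \theta^{*}$, and we can find a positive constant $\underline c_\phi$ (depending on $\eps$, i.e.\ on $\delta$) such that $\theta\le \underline c_\phi\theta^{*}$ forces $\theta<\underline\theta_\eps$.

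The only genuine step is the bookkeeping of logarithmic factors in the sparse regime; there is no real obstacle, since the definition of $R_0$ was tuned precisely so that $\log(6ed/(k\delta))$, $\log(C_\eps d/k^2+1)$ and $\log d$ are all of the same order. Putting (i) and (ii) together gives the claim that $\theta^{*}=\sqrt{k\log d/n}$ is the optimal rate of detection over the class of all tests.
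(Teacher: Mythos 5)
Your proposal is correct and takes essentially the route the paper intends: the paper simply states that Theorems~\ref{TH:uplkm} and~\ref{TH:infminimax} imply the corollary, and you supply the bookkeeping showing that on $R_0$ (using $k\le d^{0.49}$) both $\log(6ed/(k\delta))$ and $\log(C_\eps d/k^2+1)$ are comparable to $\log d$, so $\bar\theta\lesssim\theta^*\lesssim\underline\theta_\eps$ with constants depending only on $\delta$. The one point you silently inherit from Theorem~\ref{TH:uplkm} is that it is stated only for $\theta\in(\bar\theta,1)$, whereas part~(i) of the definition nominally requires the test to work for all $\theta\ge\bar c\theta^*$; this is a harmless technicality (larger $\theta$ only helps, and the $\theta<1$ restriction is a convenience in that theorem's proof), and the paper glosses over it in the same way.
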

\section{Polynomial time testing}
\label{SEC:ptesting}
It is not hard to prove that approximating $\lkm(A)$ up to a factor of $m^{1-\eps}, \eps>0$, \emph{for any} symmetric matrix $A$ of size $m \times m$ and any $k\in \{1, \ldots, m\}$ is  {\sf NP}-hard, by a trivial reduction to {\sf CLIQUE}  (see \cite{Has96,Has99,Zuc06} for hardness of approximation of {\sf CLIQUE}). Yet, our problem is not worst case and we need not consider \emph{any} matrix $A$. Rather, here, $A$ is a random matrix and we cannot directly apply the above results. 

In this section, we look for a test with good statistical properties and that can be computed in polynomial time. Indeed, finding efficient statistical methods in  high-dimension is critical. Specifically, we study a test based on a natural convex (semidefinite) relaxation of $\lkm(\Sh)$ developed in \cite{dAsGhaJor07}.

For any $A \succeq 0$ let $\SDP_k(A)$ be defined as the optimal value of the following semidefinite program:
\begin{align}
\SDP_k(A)=  \ & \text{max.} && \tr (AZ)&\label{EQ:sdp}\\
& \text{subject to}&& \tr (Z) =1, \;|Z|_1 \leq k\,,\;  Z \succeq 0\nonumber& \, 
\end{align}
This optimization problem can be reformulated  as a semidefinite program in its canonical form with a polynomial number of constraints and can therefore be solved in polynomial time up to arbitrary precision using interior point methods for example \cite{BoyVan04}. Indeed, we can write
\begin{align*}
\SDP_k(A)=  \ & \text{max.} && \sum_{i,j} A_{ij}(z_{ij}^+ - z_{ij}^-)&\\
& \text{subject to} && z_{ij}^+=z_{ji}^+\ge 0,\; z_{ij}^-=z_{ji}^-\ge 0\nonumber \\
& && \sum_{i}(z_{ii}^+ - z_{ii}^-) =1, \;\sum_{i,j}(z_{ij}^+ + z_{ij}^-) \leq k&\nonumber\\
& && \sum_{i> j}(z_{ij}^+ - z_{ij}^-)(E_{ij}+E_{ji}) +\sum_{\ell}(z_{\ell \ell}^+ - z_{\ell \ell}^-)E_{\ell \ell}  \succeq 0\nonumber \, .
\end{align*}

Consider the following test
\begin{equation}
\label{EQ:defpsiSDP}
\psi_{d,n,k} = \mathbf{1}\{\SDP^{(n)}_k(\Sh) > 1+\tau\}\, ,\quad \tau>0\,,
\end{equation}
where  $\SDP^{(n)}_k$ is a $1/\sqrt{n}$-approximation of $\SDP_k$.  \cite{BacAhidAs10} show that $\SDP^{(n)}_k$ can be computed in $\mathcal{O}(k d^3\sqrt{n \log d})$ elementary operations and thus in polynomial time. %This is achieved by minimizing a smooth approximation of the dual function, using first order methods from \cite{Nes03}.
% It therefore holds, for $\varepsilon = 1/\sqrt{n}$, that $\psi\in \cP$. We focus here on convergence results, with bounded $\theta<1$, that can be replaced by any positive constant.
\begin{theorem}
\label{TH:sdpub}
Assume that $(d,n,k)$ are such that
$$
\tilde\theta= 23 \sqrt{\frac{ k^2\log(4d^2/ \delta)}{n}} \le 1\,.
$$ 
Then, for $\theta \in [\tilde \theta, 1]$, the test $\psi$ defined in~\eqref{EQ:defpsiSDP} with threshold
$
\tau=16 \sqrt{\frac{k^2 \log(4d^2/\delta)}{n}}+\frac{1}{\sqrt n}\, ,
$
satisfies
$$
\sup_{\begin{subarray}{l}\fP_0 \in \cD_0\\\fP_1 \in \cD_1^k (\theta)\end{subarray}} \Big\{ \fP_0^{\otimes n}(\psi=1) \vee \fP_1^{\otimes n}(\psi=0) \Big \} \le \del \, .
$$
\end{theorem}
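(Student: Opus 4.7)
The plan is to control the two error types separately, using a standard duality argument to reduce the deviation $\SDP_k(\Sh)-\SDP_k(\Sigma)$ to an entrywise concentration of the empirical covariance.

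\textbf{Step 1 (duality/Hölder).} The feasible set $\cF_k=\{Z\succeq 0:\tr(Z)=1,\ |Z|_1\le k\}$ is convex, so for any symmetric matrices $A,B$,
$$
\bigl|\SDP_k(A)-\SDP_k(B)\bigr|\le \max_{Z\in\cF_k}\bigl|\tr\bigl((A-B)Z\bigr)\bigr|\le k\,|A-B|_\infty\,,
$$
because $|\tr(CZ)|\le|C|_\infty|Z|_1\le k|C|_\infty$. The plan is to apply this with $A=\Sh$ and $B=\Sigma$.

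\textbf{Step 2 (entrywise concentration of $\Sh$).} I want $|\Sh-\Sigma|_\infty\le \eta_n:=4\sqrt{\log(4d^2/\delta)/n}+4\log(4d^2/\delta)/n$ with probability at least $1-\delta$ under $\fP_0$, and the same bound (possibly up to a factor) under $\fP_v$. For the diagonal entries this follows directly from \eqref{EQ:H0hpb} applied with $u=e_i$; for off-diagonal entries I use the polarization identity
$$
(\Sh)_{ij}=\tfrac12\hvar\!\bigl((e_i+e_j)/\sqrt 2\bigr)\cdot 2-\tfrac12\bigl((\Sh)_{ii}+(\Sh)_{jj}\bigr),
$$
writing each term as an empirical variance along a unit direction and invoking \eqref{EQ:H0hpb}. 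A union bound over the at most $d+\binom{d}{2}\le 2d^2$ directions involved, taken with $\nu=\delta/(4d^2)$, then yields the claim. In the sparse regime $R_0$ the quadratic term is dominated by the $\sqrt{\cdot}$ term, so $\eta_n\le 8\sqrt{\log(4d^2/\delta)/n}$.

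\textbf{Step 3 (Type I error).} Under $\fP_0\in\cD_0$ we have $\Sigma=I_d$ and hence $\SDP_k(\Sigma)=1$. Combining Steps 1 and 2,
$$
\SDP_k(\Sh)\le 1+k\,\eta_n\le 1+8\sqrt{k^2\log(4d^2/\delta)/n}
$$
with probability at least $1-\delta$. Together with the approximation guarantee $\SDP^{(n)}_k(\Sh)\le \SDP_k(\Sh)+1/\sqrt n$, this is strictly smaller than $1+\tau$ for the chosen threshold, since $16>8$; this controls the false alarm.

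\textbf{Step 4 (Type II error).} Under $\fP_v\in\cD_1^k(\theta)$, the rank-one matrix $Z=vv^\top$ lies in $\cF_k$: $\tr(vv^\top)=|v|_2^2=1$, $|vv^\top|_1=|v|_1^2\le k|v|_2^2=k$ by Cauchy--Schwarz and $v\in\cB_0(k)$. Hence
$$
\SDP^{(n)}_k(\Sh)\ge \SDP_k(\Sh)-1/\sqrt n\ge v^\top\Sh v-1/\sqrt n=\hvar(v)-1/\sqrt n\,.
$$
By \eqref{EQ:H1hpb} with $\nu=\delta$, with probability at least $1-\delta$,
$$
\hvar(v)\ge 1+\theta-2\sqrt{2\theta k\log(2/\delta)/n}-4\log(2/\delta)/n\,.
$$
I then need to check that $\theta\ge\tilde\theta$ implies that this lower bound exceeds $1+\tau+1/\sqrt n$. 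Using $\theta\le 1$ in the cross term and the sparse-regime bound $k\log(\cdot)/n\le 1$, the cross term is at most a constant multiple of $\sqrt{k^2\log(4d^2/\delta)/n}$, so the gap $\theta-\tau$ of at least $(23-16)\sqrt{k^2\log(4d^2/\delta)/n}=7\sqrt{k^2\log(4d^2/\delta)/n}$ absorbs all the fluctuation terms for the chosen constants.

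\textbf{Main obstacle.} The non-routine step is Step 2: turning the hypothesis \eqref{EQ:H0hpb}, which is stated only for a \emph{single} direction at a time, into a uniform entrywise bound on $\Sh-\Sigma$, both under $\fP_0$ (where the concentration is on the product distribution) and under $\fP_v$ (since Step 4 also implicitly needs an upper-tail entrywise control if one wanted to avoid the rank-one feasibility shortcut). The polarization trick plus a careful union bound is what makes this work within the advertised constants; the rest is arithmetic verification in the sparse regime $R_0$.
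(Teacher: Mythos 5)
Your proof is correct, and it takes a genuinely different route for the Type I error than the paper does. The paper (Lemma~\ref{LEM:SDPH1}) invokes the dual characterization of $\SDP_k$ from \cite{BacAhidAs10}, plugging $U=-\Ph$ into $\SDP_k(\Sh)=\min_U\{\lambda_{\max}(\Sh+U)+k|U|_\infty\}$ to get $\SDP_k(\Sh)\le|\Dh|_\infty+k|\Ph|_\infty$; you instead use a \emph{primal} Lipschitz estimate $\SDP_k(A)\le\SDP_k(B)+k|A-B|_\infty$ (H\"older on the common feasible set) centered at $B=I_d$, where $\SDP_k(I_d)=1$. Both reduce the problem to entrywise concentration of $\Sh$, proved by polarization and a union bound over $O(d^2)$ directions; your ``main obstacle'' paragraph is essentially describing the content of Lemma~\ref{LEM:SDPH1}. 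The Type II side (feasibility of $Z=vv^\top$ giving $\SDP_k(\Sh)\ge\hvar(v)$, then \eqref{EQ:H1hpb}) is the same as the paper's. What the primal route buys is that it avoids SDP duality entirely and is visibly a general Lipschitz fact; what it costs is a slightly worse constant, since the diagonal deviation also gets multiplied by $k$, whereas the dual bound charges $k$ only against the off-diagonal part.

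Two small constant issues worth flagging, neither fatal. First, your polarization $\Sh_{ij}=\hvar\big((e_i+e_j)/\sqrt2\big)-\tfrac12(\Sh_{ii}+\Sh_{jj})$ involves three random quantities, so on the good event $|\Sh_{ij}|\le 2\Delta$ with $\Delta=4\sqrt{t/n}+4t/n$, not $\Delta$; the paper's symmetric polarization $\Sh_{ij}=\tfrac12\hvar\big((e_i+e_j)/\sqrt2\big)-\tfrac12\hvar\big((e_i-e_j)/\sqrt2\big)$ gives $\Delta$. So the $\eta_n$ you write down should be roughly doubled. Second, as noted, your Type~I bound is $1+k\eta_n$ where the paper's is $1+|\Dh-I_d|_\infty+k|\Ph|_\infty$. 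With both corrections you land near $1+9\sqrt{k^2\log(4d^2/\delta)/n}$, which is still comfortably below the threshold $\tau=16\sqrt{k^2\log(4d^2/\delta)/n}+1/\sqrt n$, so the conclusion holds. You should tighten the Step~4 arithmetic (checking that $\theta-\tau$ absorbs $2\sqrt{2\theta k\log(2/\delta)/n}+4\log(2/\delta)/n+2/\sqrt n$ using $\theta\le1$, $\log(4d^2/\delta)\ge\log(2/\delta)$, and $k\ge1$), but the constants $23$ versus $16$ leave a gap of $7$, which exceeds $2\sqrt2\approx2.83$ plus the lower-order terms in the sparse regime.
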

\begin{proof}
Define
\begin{eqnarray*}
\tau_0 =  16 \sqrt{\frac{ k^2\log(4d^2/ \delta)}{n}}\,, \qquad  \tau_1 = 7 \sqrt{\frac{k\log(4/\delta)}{n}}\, .
\end{eqnarray*}
%\begin{eqnarray*}
%\tilde\tau_0 &=&  2 \sqrt{\frac{ k^2\log(4d^2/ \delta)}{n}}+ 2\frac{k \log(4d^2/ \delta)}{n}+ 2\sqrt{\frac{ \log(2d/\delta)}{n}} + 2\frac{ \log(2d/\delta)}{n}\\
%\tilde\tau_1 &=& 8 \sqrt{\frac{k\log(4/\delta)}{n}}\, .
%\end{eqnarray*}
For all $\delta >0$,  $\fP_0 \in \cD_0, \fP_1 \in \cD^k_1(\theta)$,  by  Lemma~\ref{LEM:SDPH1} and Lemma~\ref{LEM:lkmH0}, since $\SDP_k(\Sh) \ge \lkm(\Sh)$,  it holds
$$
\fP_0^{\otimes n}\Big(\SDP_k(\Sh) \ge 1+ \tau_0 \Big) \le \delta \, , \quad  
\fP_1^{\otimes n}\Big(\SDP_k(\Sh) \le 1+ \theta - \tau_1\Big) \le \delta\, .$$
Recall that   $|\SDP_k^{(n)}-\SDP_k|\le 1/\sqrt{n}$ and observe that $ \tau_0 +1/\sqrt{n} = \tau \le \tilde \theta -  \tau_1 \le \theta -\tau_1$. 
%It yields the desired result.
%$$
%\ 
%$$
%
%
%-approximation of $\SDP_k$ and define the test
%$$\psi_{d,n,k} = \mathbf{1}\Big\{ \SDP_k^{(n)}(\Sh) \ge 1+ \tilde\tau_0 + \frac{1}{\sqrt{n}}\Big\}\, .$$
%Since there exists $C_{\delta}$ such that $\tilde \theta_\del \ge \tilde\tau_0+ \frac{1}{\sqrt{n}} + \tilde\tau_1 +  \frac{1}{\sqrt{n}}$, having $\theta >\tilde \theta_\del$ implies
%$$1+\theta-\tilde\tau_1- \frac{1}{\sqrt{n}} > 1+\bar \theta_\del -\tilde\tau_1 - \frac{1}{\sqrt{n}} \ge 1+\tilde\tau_0+ \frac{1}{\sqrt{n}}\,,$$
%which yields the desired result.
\end{proof}
This size of the detection threshold $\tilde \theta$ is consistent with the  results of  \cite{AmiWai08,BerRig12} for Gaussian distributions. 

Clearly, this theorem, together with Theorem~\ref{TH:infminimax}, indicate that the test based on $\SDP$ may be suboptimal within the class of all tests. However, as we will see in the next section, it can be proved to be optimal in a restricted class of computationally efficient tests.

%This statistical test is the natural relaxation of the sparse PCA problem, and uses no special knowledge of a specific distribution. For example, the diagonal method of \cite{JohLu09} exploits the added variance of a few coefficients. In a setting with a sparse covariance structure and no added variance, a la \cite{AriBubLug11}, this statistic would fail, which is not the case of the SDP method.
\section{Complexity theoretic lower bounds}
\label{SEC:CTLB}

%%%%%%%%%%%%%%%%

%\begin{definition}
%\label{DEFN:ptimetest}
%Let $p$ be a positive integer and $R \subset \N^p$. We define a test on $R$ as a sequence of functions $\psi = \{\psi_r\}_{r \in R}$. We say that $\psi$ can be \emph{computed in polynomial time} and write $\psi \in \cP$  if there exists a polynomial function $P$ such that for all $r\in R$, 
%\end{definition}
%
%Note that we intentionally do not define $time()$. Hence, we do not specify a computational model. Choosing one only affects the strength of ${\sf A_{PC}}$, defined below in definition~\ref{DEFN:hpc}. This definition is of course a tail property. For any $\psi \in \cP$, if $\psi'$ differs from the original for a finite number of $r\in R$, it still holds that $\psi' \in \cP$. Therefore, belonging to $\psi$ depends only on $r_1,\ldots,r_p$ large enough.
It is legitimate to wonder if the upper bound in Theorem~\ref{TH:sdpub} is tight. Can faster rates be achieved by this method, or by other, possibly randomized, polynomial time testing methods? Or instead, is this gap intrinsic to the problem? A partial answer to this question is provided in \cite{BerRig12}, where it is proved that the test defined in~\eqref{EQ:defpsiSDP} cannot discriminate at a level significantly lower than $\tilde \theta$. Indeed, such a test could otherwise be used to solve instances of the planted clique problem that are believed to be hard. This result is supported by some numerical evidence as well.  

In this section, we show that it is true not only of the test based on SDP but of \emph{any} test computable in randomized polynomial time.
%For simplicity, we focus on the subset $R_1 \subset R_0$ of parameters, defined by $R_1=R_0 \cap \{n<d\}$.

\subsection{Lower bounds and polynomial time reductions}
%%%%%%%%%%%%%%%%

The upper bound of Theorem~\ref{TH:sdpub}, if tight, seems to indicate that there is a gap between the detection levels that can be achieved by any test, and those that can be achieved by methods that run in polynomial time. In other words, it indicates a potential statistical cost for computational efficiency. To study this phenomenon, we take the approach favored in theoretical computer science, where our primary goal is to classify problems, rather than algorithms, according to their computational hardness. Indeed, this approach is better aligned with our definition of optimal rate of detection where lower bounds should hold for any tests. Unfortunately, it is difficult to derive a lower bound on the performance of \emph{any} candidate algorithm to solve a given problem. Rather, theoretical computer scientists have developed reductions from problem {\sf A} to problem {\sf B} with the following consequence: if problem {\sf B} can be solved in polynomial time, then so can problem {\sf A}. Therefore, if problem {\sf A} is believed to be hard then so is problem {\sf B}. Note that our reduction requires extra bits of randomness and is therefore a randomized polynomial time reduction.

 This question needs to be formulated from a statistical detection point of view. As mentioned above,  $\lkm$ can be proved to be {\sf NP}-hard to approximate. Nevertheless, such \emph{worst case} results are not sufficient to prove negative results on our \emph{average case} problem. Indeed, the matrix is $\Sh$ is random and we only need to be able to approximate $\lkm(\Sh)$ up to constant factor on most realizations. In some cases, this small nuance can make a huge difference, as problems can be hard in the worst case but easy in average (see, e.g., \cite{Bop87} for an illustration on {\sf Graph Bisection}).
In order to prove a complexity theoretic lower bound on the sparse principal component detection problem, we will build a reduction from a notoriously hard detection problem: the planted clique problem.

\subsection{The Planted Clique problem}

Fix an integer $m \ge 2$ and let $\G_m$ denote the set of undirected graphs on $m$ vertices.
Denote by $\cG(m,1/2)$ the distribution over $\G_m$ generated by choosing to connect every pair of vertices by an edge independently with probability $1/2$. For any $\kappa \in \{2,\ldots,m\}$, the distribution $\cG(m,1/2,\kappa)$ is constructed by picking $\kappa$ vertices arbitrarily and placing a clique\footnote{A clique is a subset of fully connected vertices.} between them, then connect every other pair of vertices by an edge independently with probability $1/2$. Note that $\cG(m,1/2)$ is simply the distribution of an Erd{\H{o}}s-R{\'e}nyi random graph. In the decision version of this problem, called  {\sf Planted Clique},  one is given a graph $G$ on $m$ vertices and the goal is to detect the presence of a planted clique. 
%It can be formalized as a statistical hypothesis testing problem.
\begin{definition}
\label{DEFN:pc}
Fix $m \ge \kappa>2$. Let {\sf Planted Clique} denote the following statistical hypothesis testing problem:
\begin{eqnarray*}
H_0^{\sf PC} &:& G \sim \cG(m,1/2)=\fP_0^{(G)}\\
H_1^{\sf PC} &:& G \sim \cG(m,1/2,\kappa)=\fP_1^{(G)}\,.
\end{eqnarray*}
A test for the planted clique problem is a family $\xi = \{\xi_{m,\kappa}\}$, where $\xi_{m,\kappa}\,:\,\mathbb{G}_m \to \{0,1\}$. 
%is said to: 
%\begin{itemize}
%\item[(i)]
%{\bf detect a planted clique} of size $\kappa$ in a graph of size $m$ for $(m,\kappa) \in R$ if  
%$$
%\fP_{0}^{(G)}(\psi=1)\vee \fP_{1}^{(G)}(\psi=0) \le 1.2\delta\,, \qquad \forall \ (m,\kappa) \in R
%$$
%\item[(ii)] {\bf fail to detect a planted clique}  of size $\kappa$ in a graph of size $m$ for $(m,\kappa) \in R$ if  
%$$
%\fP_{0}^{(G)}(\psi=1)\vee \fP_{1}^{(G)}(\psi=0) \ge \frac{11}{24}\,,  \qquad \forall \ (m,\kappa) \in R\,.
%$$
%\end{itemize}
\end{definition}
%Note that  $1.2\delta \le 1/5$ can be replaced by any constant arbitrary close to $\delta$. 

The search version of this problem \cite{Jer92, Kuc95}, consists in finding the clique planted under $H_1^{\sf PC}$. The decision version that we consider here is traditionally attributed to Saks \cite{KriVu02, HazKra11}. It is known  \cite{Spe94}  that  if $\kappa > 2\log_2 (m)$, the planted clique is the only clique of size $\kappa$ in the graph, asymptotically almost surely (a.a.s.). Therefore, a test based on the largest clique of $G$ allows to distinguish $H_0^{\sf PC}$ and $H_1^{\sf PC}$ for $\kappa> 2\log_2(m)$, a.a.s. This is clearly not a computationally efficient test. 

For $\kappa=o(\sqrt{m})$ there is no known polynomial time algorithm that solves this problem. Polynomial time algorithms for the case $\kappa=C\sqrt{m}$ were first proposed in \cite{AloKriSud98}, and subsequently in \cite{McS01, AmeVav11, DekGurPer10,FeiRon10,FeiKra00}. It is widely believed that there is no polynomial time algorithm that solves {\sf Planted Clique}  for any $\kappa$ of order $m^c$ for some fixed positive $c<1/2$. Recent research has been focused on proving that certain algorithmic techniques, such as the Metropolis process \cite{Jer92} and the Lov\`asz-Schrijver hierarchy of relaxations \cite{FeiKra03} fail at this task. The confidence in the difficulty of this problem is so strong that it has led researchers to prove impossibility results assuming that {\sf Planted Clique} is indeed hard. Examples include cryptographic applications, in \cite{JuePei00}, testing for $k$-wise dependence in \cite{AloAndKau07}, approximating Nash equilibria in \cite{HazKra11} and approximating solutions to the densest $\kappa$-subgraph problem by~\cite{AloAroMan11}.
 %Note that recent results by \cite{BruVem09,FriKan08} based on $r$-dimensional tensors, suggest an algorithmic approach capable of finding a planted clique of size $O(n^{1/r})$, but currently this tensor-based approach is not known to yield a polynomial time algorithm for $r >2$. 

We therefore make the following assumption on the planted clique problem. 
%It will serve as the basis for our complexity theoretic lower bounds. 
Recall that $\delta$ is a confidence level fixed throughout the paper.

\begin{hypothesis}{${\sf A_{PC}}$}
\namedlabel{HYP:apc}{${\sf A_{PC}}$}
For any $a,b \in (0,1), a<b$ and all randomized polynomial time tests $\xi=\{\xi_{m,\kappa}\}$, there exists a positive constant $\Gamma$ that may depend on $\xi, a,b$ and such that
$$
\fP_0^{(G)}(\xi_{m,\kappa}(G)=1)\vee\fP_1^{(G)}(\xi_{m,\kappa}(G)=0)\ge 1.2\delta\,, \quad \forall \  m^{\frac{a}{2}}<\Gamma \kappa<  m^{\frac{b}{2}}\,.
$$
%In other words, all polynomial tests fail to detect a planted clique of size $\kappa$ in a graph of size $m$ for $\kappa<\Gamma m^\gamma$.
%
%can be computed in polynomial time (in $n$ and $k$) such that the following holds If there exists a constant $B_{\\,\varepsilon}$, $\psi$ can detect a planted clique over all $k,n$ such that $k>B_{\nu,\varepsilon} n^{1/2-\varepsilon}$, with probability
%$$
%\fP_{0}^{(G)}(\psi_{n,k}=1)\vee \fP_{1}^{(G)}(\psi_{n,k}=0) \ge \frac 12-\nu \,.
%$$
\end{hypothesis}
Note that $1.2\delta<1/2$ can be replaced by any constant arbitrary close to $1/2$. Since $\kappa$ is polynomial in $m$, here a \emph{randomized polynomial time} test is a test that can be computed in time at most polynomial in $m$ and has access to extra bits of randomness. The fact that $\Gamma$ may depend on $\xi$ is due to the asymptotic nature of polynomial time algorithms. Below is an equivalent formulation of Hypothesis~\ref{HYP:apc}.
% can be replaced by the following equivalent  hypothesis.

\begin{hypothesis}{${\sf B_{PC}}$}
%\namedlabel{HYP:apc}{${\sf A_{PC(\gamma)}}$}
For any $a,b \in (0,1), a<b$ and all randomized polynomial time tests $\xi=\{\xi_{m,\kappa}\}$, there exists $m_0\ge 1$ that may depend on $\xi, a, b$ and such that
$$
\fP_0^{(G)}(\xi_{m,\kappa}(G)=1)\vee\fP_1^{(G)}(\xi_{m,\kappa}(G)=0)\ge 1.2\delta\,, \quad \forall \  m^\frac{a}{2}<\kappa< m^{\frac{b}{2}}\,,  \ m \ge m_0\,.
$$
\end{hypothesis}
Note that we do not specify a computational model intentionally. Indeed, for some restricted computational models, Hypothesis~\ref{HYP:apc} can be proved to be true for all $a<b\in (0,1)$ \cite{Ros10, FelGriRey13}. Moreover, for more powerful computational models such as Turing machines, this hypothesis is conjectured to be true. It was shown in~\cite{BerRig12} that improving the detection level of the test based on SDP would lead to a contradiction of Hypothesis~\ref{HYP:apc} for some $b \in (2/3, 1)$. Herefater, we extend this result to all randomized polynomial time algorithms, not only those based on SDP.

\subsection{Randomized polynomial time reduction}

Our main result is based on a randomized polynomial time reduction of an instance of the planted clique problem to an instance of the sparse PC detection problem.
In this section, we describe this reduction and call it the \emph{bottom-left transformation}. For any $\mu \in (0,1)$, define 
$$
R_\mu=R_0 \cap \{k \ge n^\mu\} \cap \{n<d\}\,.
$$
The condition $k \ge n^\mu$ is necessary since ``polynomial time" is an intrinsically asymptotic notion and for fixed $k$, computing $\lkm$  takes polynomial time in $n$. The condition $n<d$ is an artifact of our reduction and could potentially be improved. Nevertheless, it characterizes the high-dimensional setup we are interested in and allows us to shorten the presentation.

Given $(d,n,k) \in R_\mu$, fix integers $m, \kappa$ such that $n \le m<d$, $k\le \kappa\le m$ and let $G=(V,E) \in \G_{2m}$ be an instance of the planted clique problem with a potential clique of size $\kappa$. 
We begin by extracting a bipartite graph as follows. Choose $n$ right vertices $V_{{\tt right}}$ at random among the $2m$ possible and choose $m$ left vertices $V_{{\tt left}}$ among the $2m-n$ vertices that are not in $V_{{\tt right}}$. The edges of this bipartite graph\footnote{The ``bottom-left" terminology comes from the fact that  the adjacency matrix of this bipartite graph can be obtained as the bottom-left corner of the original adjacency matrix after a random permutation of the row/columns.} are $E\cap\{V_{{\tt left}}\times V_{{\tt right}}\}$. Next, since $d>m$, add $d-m \ge 1$ new left vertices and place an edge between each new left vertex and every old right vertex independently with probability $1/2$. Label the left (resp. right) vertices using a random permutation of $\{1, \ldots, d\}$ (resp. $\{1, \ldots, n\}$) and denote by $V'=(\{1, \ldots, d\}\times\{1, \ldots, n\}, E)$ the resulting $d\times n$ bipartite graph. Note that if $G$ has a planted clique of size $\kappa$, then  $V'$ has a planted biclique of random size. 
%The size of this biclique will be studied in Lemma~\ref{LEM:totvar}.

Let $B$ denote the $d \times n$ adjacency matrix of $V'$ and let $\eta_1, \ldots, \eta_n$ be $n$ i.i.d Rademacher random variables that are independent of all previous random variables. Define
$$
X_i^{(G)} = \eta_i (2B_i-1) \in \{-1,1\}^d\,,
$$
where $B_i$ denotes the $i$-th column of $B$. Put together, these steps define the bottom-left transformation $\bl\,:\,\G_{2m}\to  \R^{d\times n}$ of a graph $G$ by
\begin{equation}
\label{EQN:graphvec}
\bl(G)=\left( X_1^{(G)}, \ldots, X_n^{(G)} \right) \in \R^{d\times n}\,.
\end{equation}
Note that $\bl(G)$ can be constructed in randomized polynomial time in $d, n,k, \kappa, m$.

\subsection{Optimal detection over randomized polynomial time tests}
%All evidence indicates that this is true as soon as ${\sf P \neq NP}$, for the definition of necessary number of steps of a Turing machine.

For any $\alpha \in [1,2]$, define the detection level $\theta_\alpha>0$ by $ \theta_\alpha=\sqrt{\frac{k^\alpha}{n}}\,.$

Up to logarithmic terms, it interpolates polynomially between the statistically optimal detection level $\theta^*$ and the detection level $\tilde \theta$ that is achievable by the polynomial time test based on $\SDP$. We have $\theta^*=\theta_1\sqrt{ \log d}$ and $\tilde \theta=C\theta_2\sqrt{ \log d}$ for some positive constant $C$.

%In view of Hypothesis~\ref{HYP:apc}, we restrict our attention to families of parameter where $k$ is polynomial in $n$. 
\begin{theorem}
\label{TH:ctlowbnd}
Fix $\alpha \in [1,2), \mu \in (0,\frac{1}{4-\alpha})$ and define
\begin{equation}
\label{EQ:gamma}
a=2\mu\,, \qquad b=1-(2-\alpha)\mu\,.
\end{equation}
For any $\Gamma>0$, there exists a constant $L>0$ such that the following holds. For any $(d,n,k) \in R_\mu$,  there exists $m, \kappa$ such that $(2m)^\frac{a}2\le \Gamma \kappa \le (2m)^{\frac{b}{2}}$, a random transformation $\bl=\{\bl_{d,n, k, m, \kappa}\}$, $\bl_{d,n,k, m, \kappa}\,:\,\G_{2m}\to \R^{d\times n}$ that can be computed in polynomial time   and distributions $\fP_0 \in \cD_0, \fP_1 \in \cD_1^k(L\theta_\alpha)$ such that for any test $\psi=\{\psi_{d,n,k}\}$, we have
$$
\fP_0^{\otimes n}(\psi_{d,n,k}=1) \vee \fP_1^{\otimes n}(\psi_{d,n,k}=0)\ge \fP_0^{(G)}(\xi_{m,\kappa}(G)=1)\vee\fP_1^{(G)}(\xi_{m,\kappa}(G)=0)-\frac{\delta}{5}\,,
$$
where  $\xi_{m,\kappa}=\psi_{d,n,k}\circ \bl_{d,n,k, m, \kappa}$. 
%Moreover, it is sufficient to take 
%$$
%m \le N\big(nk^{2-\alpha}\big)^{\frac{1}{1+\gamma}}\,  \quad \kappa \le \Gamma m^{\frac{b}{2}}\,.
%$$
%for some large enough constant $N$ that depends on $\delta$. 
%It implies that it is {\sf PC($\gamma$)}-hard to discriminate between $H_0$ and $H_1$ at rate $\theta_\alpha$.
\end{theorem}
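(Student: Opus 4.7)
The plan is to analyze the pushforward laws $\bl_*\fP_0^{(G)}$ and $\bl_*\fP_1^{(G)}$ and compare them to appropriate iid product laws $\fP_0^{\otimes n},\fP_1^{\otimes n}$ on $\R^{d\times n}$. Under $H_0^{\sf PC}$, the bipartite adjacency matrix $B$ constructed by the bottom-left transformation has independent $\mathrm{Bern}(1/2)$ entries---the subsampled block inherits iid edges from the Erd\H{o}s--R\'enyi graph, and the appended left vertices use fresh Bernoulli bits. Consequently each column $X_i=\eta_i(2B_i-1)$ is a vector of iid Rademacher entries, and the columns are mutually independent through the independent $\eta_i$'s. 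Taking $\fP_0$ to be the uniform law on $\{\pm 1\}^d$, one has $\bl_*\fP_0^{(G)}=\fP_0^{\otimes n}$ exactly; since $\fP_0$ is isotropic and sub-Gaussian, Bernstein's inequality yields \eqref{EQ:H0hpb} and $\fP_0\in\cD_0$. In particular $\fP_0^{(G)}(\xi=1)=\fP_0^{\otimes n}(\psi=1)$ for every test $\psi$.

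Under $H_1^{\sf PC}$, write $J\subset[d]$ and $K\subset[n]$ for the images of the clique vertices landing respectively in $V_{\tt left}$ and $V_{\tt right}$ after the random relabeling. Their sizes are hypergeometric with means $\E|J|\asymp\kappa/2$ and $\E|K|\asymp\kappa n/(2m)$, and conditional on those sizes the supports are uniform. Conditional on $(J,K)$ the columns of $\bl(G)$ are independent: for $i\in K$ one has $X_i=\eta_i\mathbf{1}_J+W_i$ with $W_i$ supported on $J^c$, iid Rademacher there, and independent of $\eta_i$; for $i\notin K$, $X_i$ is iid Rademacher on $[d]$. Let $\mu_J$ denote the law of $X_i$ for $i\in K$, $\mu_0$ the Rademacher law, and define $\fP_1^J:=p\,\mu_J+(1-p)\mu_0$ with $p:=|K|/n$. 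A direct computation gives $\var(v_J)=1+p(|J|-1)$ under $\fP_1^J$ in the direction $v_J:=\mathbf{1}_J/\sqrt{|J|}$, so the signal strength is $\theta\asymp\kappa^2/(4m)$. Since $(v_J^\top X_i)^2$ is bounded by $|J|$ with variance $O(\theta|J|)$, Bernstein's inequality verifies \eqref{EQ:H1hpb} and places $\fP_1^J\in\cD_1^k(L\theta_\alpha)$ as soon as $|J|\le k$ and $\kappa^2/(4m)\ge L\theta_\alpha$.

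Three couplings now bring $\bl_*\fP_1^{(G)}$ close to a mixture of such iid product laws: (i) restrict to the high-probability event that $|J|,|K|$ concentrate within a narrow window around their means (hypergeometric tail bound); (ii) replace the uniformly random $|K|$-subset $K\subset[n]$ by iid $\mathrm{Bern}(p)$ indicators (classical Hypergeometric-versus-Binomial total-variation bound); (iii) similarly handle the size fluctuation of $|J|$. With the choice of $(m,\kappa)$ below, the cumulative TV loss is at most $\delta/5$, and the conditional law of $\bl(G)$ given $J$ is within that TV of $\fP_1^{J\otimes n}$ uniformly over $J$ of the targeted size. Hence for any test $\psi$,
$$
\bl_*\fP_1^{(G)}(\psi=0)\ \le\ \E_J\!\left[\fP_1^{J\otimes n}(\psi=0)\right] + \tfrac{\delta}{5}\ \le\ \max_{J}\ \fP_1^{J\otimes n}(\psi=0) + \tfrac{\delta}{5}.
$$
Setting $\fP_1:=\fP_1^{J^\star}$ for a maximizer $J^\star$ places $\fP_1$ in $\cD_1^k(L\theta_\alpha)$ and, combined with the $\fP_0$ equality, delivers the theorem's inequality.

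It remains to exhibit $(m,\kappa)$ with $n\le m<d$, $\kappa\le 2k$, $\kappa^2/(4m)\ge L\sqrt{k^\alpha/n}$, and $(2m)^{\mu}\le\Gamma\kappa\le(2m)^{(1-(2-\alpha)\mu)/2}$. The bound $\mu<1/(4-\alpha)$ forces $a<b$, and a routine exponent calculation (e.g., take $\kappa\asymp\sqrt{m}(k^\alpha/n)^{1/4}$ to saturate the signal bound, then choose $m$ polynomially in $n,k$ inside the resulting window) shows these constraints are jointly feasible for $L$ a sufficiently large constant depending only on $\Gamma$, using the assumption $(d,n,k)\in R_\mu$ and in particular $k\ge n^\mu$, $n<d$. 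The hard part is the preceding paragraph: because the random support $J$ is shared across all $n$ columns of $\bl_*\fP_1^{(G)}$ while any $\fP_1\in\cD_1^k$ fixes a single direction $v$, the two laws cannot be made close in TV directly; the ``worst-case $J^\star$'' step finesses this by effectively choosing $\fP_1$ adapted to $\psi$, and the couplings (i)--(iii) must be carefully calibrated so that the cumulative loss stays below $\delta/5$.
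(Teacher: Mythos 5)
Your overall plan — compose $\psi$ with the bottom-left map, observe that the $H_0$ pushforward is exactly an iid Rademacher product, and control the $H_1$ side by a total-variation comparison to an iid product — is the same architecture as the paper's, and your $H_0$ paragraph is correct. The $H_1$ part, however, has a genuine gap.

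The theorem requires a \emph{single} pair $\fP_0,\fP_1$, chosen before the test, such that the displayed inequality holds \emph{for every} $\psi$. Your $\fP_1 := \fP_1^{J^\star}$ is defined as the maximizer of $J\mapsto \fP_1^{J\otimes n}(\psi=0)$, so $\fP_1$ depends on $\psi$. This swaps the quantifiers: you establish ``for each $\psi$ there exists $\fP_1$\dots'' rather than ``there exists $\fP_1$ such that for all $\psi$\dots''. You correctly diagnose the source of the difficulty — since the planted support $J$ is shared across all $n$ columns of $\bl(G)$, the pushforward $\bl_*\fP_1^{(G)}$ is close to a \emph{mixture} $\E_J[\fP_1^{J\otimes n}]$, not to any single product law — but the worst-case-$J^\star$ step does not resolve it under the theorem's quantifier structure; it only yields the weaker, test-dependent version. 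The paper avoids this by a different device in Lemma~\ref{LEM:totvar}: the law of the left-support indicators $\gamma$ is replaced by its conditional law given $\sum_j\gamma'_j\ge k$ (and the hypergeometric right-indicators $\eps'$ by iid $\eps$), the $k$-sparse direction $Z$ is taken as the normalized indicator of the first $k$ ones of $\gamma$ (always well defined thanks to the conditioning), and membership of the resulting single $\fP_1$ in $\cD_1^k(\bar\theta)$ is checked via the averaging inequality \eqref{EQ:infave} rather than by picking a test-dependent $J^\star$.

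A secondary gap: you set $v_J=\mathbf{1}_J/\sqrt{|J|}$, which lies in $\cB_0(k)$ only when $|J|\le k$. But the parameter choices that make the signal strength $\kappa^2/m$ comparable to $\theta_\alpha$ force $\kappa \gtrsim k$ (in the regime $k\ge M^{-1}n^{1/(4-\alpha)}$ the paper takes $\kappa$ of order $Nk\log N$), so $|J|$ concentrates around $\kappa/2\gg k$ and $v_J$ is \emph{not} $k$-sparse. One must truncate to exactly $k$ nonzero coordinates, and this truncation is precisely what makes the conditioning $\sum\gamma'\ge k$ necessary — it guarantees there are at least $k$ ones to keep. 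Your sketch (``as soon as $|J|\le k$'') does not address this. The remaining ingredients of your proposal — the Diaconis--Freedman hypergeometric-to-binomial coupling, concentration of $|J|,|K|$, and the exponent bookkeeping placing $\Gamma\kappa$ in $[(2m)^{a/2},(2m)^{b/2}]$ — are in line with the paper and fine in outline, but without fixing the two issues above the proof does not yield the statement as written.
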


\begin{proof}
Fix $(d,n,k)\in R_\mu, \alpha \in [1,2)$.
First, if $G$ is an Erd{\H{o}}s-R{\'e}nyi graph, $\bl(G)=\big(X_1^{(G)},\ldots,X_n^{(G)}\big)$ is an array of $n$ i.i.d. vectors of $d$ independent Rademacher random variables. Therefore $X_1^{(G)} \sim \fP_0^{\bl(G)} \in \cD_0$.

Second, if $G$ has a planted clique of size $\kappa$, let $\fP^{\bl(G)}$ denote the joint distribution of $\bl(G)$.
The choices of $\kappa$ and $m$ depend on the relative size of $k$ and $n$. Our proof relies on the following lemma.
\begin{lemma}
\label{LEM:totvar}
Fix $\beta>0$ and integers $m,\kappa, n, k$ such that $1\le n \le m$, $2\le k \le \kappa \le m$,
\begin{equation}
\label{EQ:condtotvar}
(a) \ \frac{m}{n}\ge \frac{8}{\beta\delta}\,, \qquad (b)\ \frac{n\kappa}{m}\ge 16\log\big(\frac{m}n\big)\,, \qquad (c)\   \frac{n\kappa}{m}\ge 8k\, .
\end{equation}
Moreover, define
$$
\bar \theta=\frac{(k-1)\kappa}{2m}\,,
$$
Let $G\sim \cG(2m, 1/2, \kappa)$ and $\bl(G) = \big(X_1^{(G)},\ldots,X_n^{(G)}\big) \in \R^{d\times n}$ be defined in \eqref{EQN:graphvec}. Denote by $\fP_{1}^{\bl(G)}$  the  distribution of $\bl(G)$. Then, there exists a distribution $\fP_1 \in \D_1^{k}(\bar\theta)$  such that
$$
\big\|\fP_1^{\bl(G)}-\fP_1^{\otimes n} \big\|_{\sf TV} \le \beta\delta\, .
$$
\end{lemma}
\begin{proof}
Let $S \subset \{1, \ldots, n\}$ (resp. $T \subset \{1, \ldots, d\}$) denote the (random) right  (resp. left) vertices of $V'$ that are in the planted biclique. 

Define the random variables
$$
\begin{array}{ll}
\varepsilon'_i=\mathbf{1}\{i \in S\}, &i=1, \ldots, n\\
\gamma'_j = \mathbf{1}\{j \in T \},& j=1, \ldots, d\, .
\end{array}
$$
On the one hand, if $i\notin S$, i.e., if $\eps'_i=0$, then  $X^{(G)}_i$ is a vector of independent Rademacher random variables. On the other hand, if $i\in S$,  i.e., if $\eps'_i=1$ then, for any $j=1, \ldots, d$, 
$$
X_{i,j}^{(G)}=Y'_{i,j}=\left\{
\begin{array}{ll}
\eta_i & \text{if}\ \gamma'_j =1\,,\\
r_{ij} & \text{otherwise,}
\end{array}\right.
$$
where $r=\{r_{ij}\}_{ij}$ is a $n \times d$ matrix of i.i.d Rademacher random variables.

We can therefore write
$$
X_i^{(G)}= (1-\varepsilon_i') r_{ i} + \varepsilon_i' Y'_{ i}\,,\quad i=1, \ldots, n\,,
$$
where $Y_i'=(Y_{i,1}', \ldots, Y_{i,d}')^\top$ and $r_i^\top$ is the $i$th row of $r$.

Note that the  $\eps_i'$s are not independent.  Indeed, they correspond to $n$ draws \emph{without} replacement from an urn that contains $2m$ balls (vertices) among which $\kappa$ are of type $1$ (in the planted clique) and the rest are of type $0$ (outside of the planted clique). Denote by $\bp_{\eps'}$ the joint distribution of $\eps'=(\eps_1', \ldots, \eps_n')$ and  define their ``with replacement" counterparts as follows. Let $\eps_1, \ldots, \eps_n$ be $n$ i.i.d. Bernoulli random variables with parameter $p=\frac{\kappa}{2m} \le \frac{1}{2}$. Denote by $\bp_{\eps}$ the joint distribution of $\eps=(\eps_1, \ldots, \eps_n)$.

We also replace the distribution of the $\gamma'_j$s as follows. Let $\gamma=(\gamma_1, \ldots, \gamma_n)$ have conditional distribution given $\eps$ be given by 
$$
\bp_{\gamma|\eps}(A)=\Pro\Big(\gamma' \in A \Big| \sum_{i=1}^d \gamma'\ge k, \eps'=\eps\Big)\,.
$$
Define $\big(X_1, \ldots, X_n\big)$ by
$$
X_i =  (1-\varepsilon_i) r_i + \varepsilon_i Y_i\,,\quad i=1, \ldots, n\,,
$$
where $Y_i \in \R^d$ has coordinates given by
$$
Y_{i,j}=\left\{
\begin{array}{ll}
\eta_i & \text{if}\ \gamma_j =1\\
r_{ij} & \text{otherwise}
\end{array}\right.
$$
With this construction, the $X_i$s are iid. Moreover, as we will see, the joint distribution $\fP_1^{\bl(G)}$ of $\bl(G)=\big(X_1^{(G)}, \ldots, X_n^{(G)}\big)$ is close in total variation to the joint distribution $\fP_1^{\otimes n}$ of $\big(X_1, \ldots, X_n\big)$. 

Note first that Markov's inequality yields
\begin{equation}
\label{EQ:cheb}
\Pro\Big(\sum_{i=1}^n \eps_i>\frac{\kappa}{2}\Big)\le \frac{2np}{\kappa} =  \frac{n}{m}\,.
\end{equation}
%\begin{equation}
%\label{EQ:okamoto1}
%\Pro\Big(\sum_{i=1}^n \eps_i<\frac{\kappa}{N}-\sqrt{\frac{\kappa}{N}\log(20/ \delta)}\Big)\le \frac{ \delta}{20}\,.
%\end{equation}
Moreover, given $\sum_{i=1}^n \eps_i=s$, we have $\sum_{i=1}^d \gamma_i \ge U\sim\cH(2m-n, \kappa-s, n)$. It follows from \cite{DiaFre80}, Theorem~$(4)$ that
$$
\Big\|\cH(2m-n, \kappa-s, n)-\cB\Big(n, \frac{\kappa-s}{2m-n}\Big)\Big\|_{\sf TV}\le \frac{4n}{2m-n}\le \frac{4n}{m}\,.
$$
Together with the Chernoff-Okamoto inequality~\cite{Dud99}, Equation (1.3.10), it yields 
$$
\Pro\Big(U<\frac{n(\kappa-s)}{2m-n}-\sqrt{\frac{n(\kappa-s)}{2m-n}\log\big(\frac{m}n\big)}\Big|\sum_{i=1}^n \eps_i=s\Big)\le \frac{n}{m}+\frac{4n}{m}=\frac{5n}{m}\,.
$$
Combined with~\eqref{EQ:cheb} and view of~\eqref{EQ:condtotvar}$(b, c)$, it implies that with probability $1-6n/m$, it holds
\begin{equation}
\label{EQ:controlT}
\sum_{j=1}^d \gamma_j\ge U\ge \frac{n \kappa}{4m}-\sqrt{\frac{n\kappa}{4m}\log\big(\frac{m}n\big)}\ge \frac{n \kappa}{8m} \ge k\,.
\end{equation}

Denote by $\bp$ the joint distribution of $(\eps_1, \ldots, \eps_n,\gamma_1,\ldots,\gamma_d)$ and by $\bp'$ that of $(\eps'_1, \ldots, \eps'_n,\gamma'_1,\ldots,\gamma'_d)$. Using again \cite{DiaFre80}, Theorem~$(4)$ and~\eqref{EQ:condtotvar}(a), we get
$$
\|\bp'-\bp  \|_{\sf TV}\le \frac{6n}{m}+  \|\bp_{\eps'}-\bp_{\eps}\|_{\sf TV} \le  \frac{6n}{m}+\frac{4n}{2m}=\frac{8n}{m}\le \beta\delta\, .
$$
Since  the conditional distribution of $\big(X_1, \ldots, X_n\big)$ given $(\eps, \gamma)$ is the same as that of $\bl(G)$ given $(\eps', \gamma')$, we have
$$
\|\fP_1^{\bl(G)}-\fP^{\otimes n}_1\|_{\sf TV} = \|\bp'-\bp\|_{\sf TV} \le \beta\delta\,.
$$
It remains to prove that $\bP_1 \in \cD_1^k(\bar \theta)$. Fix $\nu>0$ and define $Z \in \cB_0(k)$ by 
$$
Z_j=\left\{
\begin{array}{ll}
\gamma_j/\sqrt{k}\,, & \text{if} \ \sum_{i=1}^j \gamma_i \le k\\
0 & \text{otherwise.}
\end{array}\right.
$$
Denote by $S_Z\subset \{1, \ldots, d\}$, the support of $Z$. Next, observe that for any $x, \theta>0$,  it holds
\begin{equation}
\label{EQ:infave}
\inf_{v \in \cB_0(k)}\fP_1^{\otimes n}\Big(\hvar(v)-(1+\theta)<-x\Big) \le \fP_1^{\otimes n}\Big(\hvar(Z)-(1+\theta)<-x\Big)\, . 
\end{equation}
Moreover, for any $i=1, \ldots, n$
$$
(Z^\top X_i)^2 = \frac{1}{k}\Big(k\eps_i\eta_i+(1-\eps_i)\sum_{j \in S_Z}r_{ij}\Big)^2=\eps_ik + (1-\eps_i)\frac{1}{k}\Big(\sum_{j \in S_Z}r_{ij}\Big)^2\,.
$$
Therefore, since $Z$ is independent of the $r_{ij}$s,  the following equality holds in distribution:
$$
(Z^\top X_i)^2\ \substack{\mathrm{dist.}\\=} \ 1+\eps_i(k-1)+\frac{2(1-\eps_i)}{k}\sum_{\ell=1}^{{k \choose 2}}\omega_{i,\ell}\,,
$$
where $\omega_{i,\ell}, i,\ell \ge 1$ is a sequence of i.i.d Rademacher random variables that are independent of the $\eps_i$s. Note that by Hoeffding's inequality, it holds with probability at least $1-\nu/2$,
$$
\frac{2}{nk}\sum_{i=1}^n\sum_{\ell=1}^{{k \choose 2}}\omega_{i,\ell} \ge -\frac{4}{nk}\sqrt{2n{k \choose 2}\log(2/\nu)}\ge  -4\sqrt{\frac{\log(2/\nu)}{n}}\,.
$$
Moreover,  it follows from the Chernoff-Okamoto inequality \cite{Dud99}, Equation (1.3.10), that with probability at least $1-\nu/2$, it holds
$$
\frac{k-1}{n}\sum_{i=1}^n\eps_i \ge \frac{(k-1)}{n}np-\frac{k-1}{n}\sqrt{2np\log(2/\nu)}\,.
$$
Put together, the above two displays imply that with probability $1-\nu$, it holds
\begin{align*}
\hvar(Z)&>1+\frac{(k-1)\kappa}{2m}-\frac{k-1}{n}\sqrt{\frac{n\kappa}{m}\log(2/\nu)} -4\sqrt{\frac{\log(2/\nu)}{n}}\\
&\ge 1+\frac{(k-1)\kappa}{2m}-\sqrt{2k\frac{(k-1)\kappa}{2m}\frac{\log(2/\nu)}{n}} -4\sqrt{\frac{\log(2/\nu)}{n}}\\
&=1+\bar \theta-\sqrt{2k\bar \theta\frac{\log(2/\nu)}{n}} -4\sqrt{\frac{\log(2/\nu)}{n}}\, .
\end{align*}
Together with~\eqref{EQ:infave}, this completes the proof.
\end{proof}

Define $N=\lceil 40/\delta\rceil$.  Assume first that $k \ge M^{-1}n^{\frac{1}{4-\alpha}}$ where $M>0$ is a constant to be chosen large enough (see below). Take 
$
\kappa=\max\big(8,M\log(N)\big)Nk\,, m=Nn.
$ 
It implies that
$$
\bar \theta:=\frac{(k-1)\kappa}{2m}\ge \frac{Mk^2}{4n}\ge \frac{1}{4M^{1-\frac{\alpha}{2}}}\sqrt{\frac{k^\alpha}{n}}\, .
$$
Moreover, under these conditions, it is easy to check that \eqref{EQ:condtotvar} is satisfied with $\beta=1/5$ since   and we are therefore in a position to apply Lemma~\ref{LEM:totvar}. It implies that there exists $\fP_1 \in \cD_1^k(\bar \theta)$ such that
$
\big\|\fP_1^{\bl(G)}-\fP_1^{\otimes n} \big\|_{\sf TV} \le \delta/5\,.
$

Assume now that $k < M^{-1}n^{\frac{1}{4-\alpha}}$. 
Take $m, \kappa\ge 2$ to be the largest integers such that
$$
m \le 2N\big(nk^{2-\alpha}\big)^{\frac{1}{2-b}} \,  \quad \Gamma\kappa \le (2 m)^{\frac{b}{2}} \,.
$$
Note that $\Gamma\kappa\ge (2m)^{\frac{a}2}$. Let us now check condition~\eqref{EQ:condtotvar}. It holds, for $M$ large enough,
\begin{eqnarray*}
(a)&& \frac{m}{n}> \frac{N}{n} \big(n^{1+(2-\alpha)\mu}\big)^{\frac{1}{2-b}}=N\ge 40/\delta.\\
(b)&& \frac{n\kappa}{m}\ge \frac{1}{2\Gamma(4N)^\frac{b}{2}}\sqrt{\frac{n}{k^{2-\alpha}}} > \frac{ M^{1-\frac{\alpha}{2}}}{2\Gamma(4N)^\frac{b}{2}}n^{\frac{1}{4-\alpha}}\ge 16\log\Big(\frac{m}{n}\Big)\, .\\
(c)&&\frac{n\kappa}{m}\ge \frac{1}{2\Gamma(4N)^\frac{b}{2}}\sqrt{\frac{n}{k^{2-\alpha}}} > \frac{ M^{2-\frac{\alpha}{2}}}{2\Gamma(4N)^\frac{b}{2}}k\ge 8k \, .\\
\end{eqnarray*}
Under these conditions, \eqref{EQ:condtotvar} is satisfied with $\beta=1/5$   and we are therefore in a position to apply Lemma~\ref{LEM:totvar}. It implies that there exists $\fP_1 \in \cD_1^k(\bar \theta)$ such that
$
\big\|\fP_1^{\bl(G)}-\fP_1^{\otimes n} \big\|_{\sf TV} \le \delta/5\,,
$
where $
\bar \theta:=\frac{(k-1)\kappa}{2m}\ge \frac{1}{8\Gamma(4N)^\frac{b}{2}}\sqrt{\frac{k^{\alpha}}{n}}$, taking $
L=\min\Big(\frac{1}{4M^{\alpha-1}},\frac{1}{8\Gamma(4N)^\frac{b}{2}}\Big)\,,
$
yields that $\fP_1 \in \cD_1^k(L\theta_\alpha)$ for any $(d,n,k) \in R_\mu$. Moreover,  
%
%%%% end bl
%
%
%
%Second, if $G$ has a planted clique of size $\kappa=4(N-1)(k\wedge \sqrt{n})$, let $\fP^{\bl(G)}$ denote the joint distribution of $\bl(G)$. It follows from Lemma~\ref{LEM:totvar} that if $N \ge 81/\delta, k\ge \log(5/\delta)$, there exists a distribution $\fP_1 \in \cD^{k}_1(\theta)$, where $\theta=(2k^2/n)\wedge 2$ such that the total variation distance between  $\fP_1^{\bl(G)}$ and the product distribution $\fP_1^{\otimes n}$ is bounded by $\delta/5$.
%%$$
%%\big\|\fP_1^{\bl(G)}-\fP_1^{\otimes n} \big\|_{{\sf TV}}\le \delta/5\,.
%%$$
%Therefore,
$$
\fP^{(G)}_0(\psi\circ \bl(G)=1) \vee \fP^{(G)}_1(\psi\circ \bl(G)=0) \le  \fP_0^{\otimes n}(\psi=1) \vee \fP_1^{\otimes n}(\psi=0) +\delta/5\,.
$$
\end{proof}
Theorems~\ref{TH:sdpub} and~\ref{TH:ctlowbnd} imply the following result.
\begin{corollary}
Fix $\alpha \in [1,2), \mu \in (0,\frac{1}{4-\alpha})$. Conditionally on Hypothesis~\ref{HYP:apc}, the optimal rate of detection $\theta^\circ$ over the class of randomized polynomial time tests satisfies
$$
\sqrt{\frac{k^\alpha}{n}}\le \theta^\circ\le \sqrt{\frac{k^2\log d}{n}}\,, \quad (d,n,k) \in R_\mu\,.
$$
\end{corollary}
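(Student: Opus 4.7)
The plan is to derive the upper and lower bounds on $\theta^\circ$ from the two preceding theorems in concert with Hypothesis~\ref{HYP:apc}.

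The upper bound is immediate. The SDP-based test $\psi$ defined in~\eqref{EQ:defpsiSDP} is computable in (deterministic, hence randomized) polynomial time, and Theorem~\ref{TH:sdpub} shows that its maximum test error drops below $\delta$ as soon as $\theta \ge 23\sqrt{k^2 \log(4d^2/\delta)/n}$. Consequently this test discriminates at rate $\sqrt{k^2 \log d/n}$, which establishes $\theta^\circ \le \sqrt{k^2 \log d/n}$ up to a multiplicative constant.

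For the matching lower bound I would argue by reduction from planted clique. Fix any randomized polynomial time test $\phi = \{\phi_{d,n,k}\}$ and let $a = 2\mu$, $b = 1-(2-\alpha)\mu$ as in~\eqref{EQ:gamma}. A short check shows $0 < a < b < 1$: positivity of $b-a = 1-(4-\alpha)\mu$ uses $\mu < 1/(4-\alpha)$, while $b < 1$ uses $\alpha < 2$. Define the composite test $\xi_{m,\kappa} = \phi_{d,n,k} \circ \bl_{d,n,k,m,\kappa}$, which is a randomized polynomial time test for planted clique because $\bl$ itself runs in randomized polynomial time. Applying Hypothesis~\ref{HYP:apc} to $\xi$ with this $(a,b)$ yields a constant $\Gamma > 0$, depending on $\phi$, such that $\xi_{m,\kappa}$ has planted clique test error at least $1.2\delta$ whenever $m^{a/2} < \Gamma\kappa < m^{b/2}$. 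Feeding this $\Gamma$ into Theorem~\ref{TH:ctlowbnd} produces a constant $L = L(\Gamma) > 0$ and, for every $(d,n,k) \in R_\mu$, a choice of $(m,\kappa)$ in the above window along with distributions $\fP_0 \in \cD_0, \fP_1 \in \cD_1^k(L\theta_\alpha)$ for which
$$
\fP_0^{\otimes n}(\phi_{d,n,k} = 1) \vee \fP_1^{\otimes n}(\phi_{d,n,k} = 0) \ge 1.2\delta - \delta/5 = \delta.
$$
Setting $\underline c_\phi = L$ completes the lower bound, yielding $\theta^\circ \ge \sqrt{k^\alpha/n}$ up to a constant.

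The main obstacle will be untangling the nested quantifiers. Hypothesis~\ref{HYP:apc} supplies $\Gamma$ only once the test $\xi$ is fixed, while Theorem~\ref{TH:ctlowbnd} needs $\Gamma$ as input in order to return $L$. The resolution is that the bottom-left transformation $\bl$ is defined uniformly over all admissible parameters, independently of $\phi$; hence once $\phi$ is fixed we first extract $\Gamma = \Gamma(\phi)$ from Hypothesis~\ref{HYP:apc} applied to $\xi = \phi \circ \bl$, and then $L(\Gamma)$ from Theorem~\ref{TH:ctlowbnd}. Both constants ultimately depend only on $\phi$, which is exactly the test-dependent constant $\underline c_\phi$ permitted by the definition of optimal rate of detection.
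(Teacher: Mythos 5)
Your proposal is correct and takes essentially the same route as the paper: the upper bound follows from the polynomial-time SDP test of Theorem~\ref{TH:sdpub}, and the lower bound combines the reduction in Theorem~\ref{TH:ctlowbnd} with Hypothesis~\ref{HYP:apc} applied to $\xi = \phi \circ \bl$. Your explicit unpacking of the quantifier order (fix $\phi$, extract $\Gamma$ from the hypothesis, then feed $\Gamma$ into the theorem to get $L$) and the check that $0 < a < b < 1$ are more detailed than the paper's one-line inequality chain, but the underlying argument is identical.
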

\begin{proof}
Let $\cT$ denote the class of randomized polynomial time tests. Since $\bl$ can be computed in randomized polynomial time, $\psi \in \cT$ implies that $\xi=\psi\circ\bl \in \cT$. Therefore, for all $(d,n,k) \in R_\mu$, 
\begin{equation*}
\inf_{\psi\in \cT}\fP_0^{\otimes n} (\psi=1) \vee \fP_1^{\otimes n}(\psi=0) \ge \inf_{\xi\in \cT}\fP_0^{(G)}(\xi(G)=1)\vee\fP_1^{(G)}(\xi(G)=0)-0.2\delta =\delta\,.
\end{equation*}
where the last inequality follows from Hypothesis~\ref{HYP:apc} with $a,b$ as in~\eqref{EQ:gamma}. Therefore $\theta^\circ \ge \theta_\alpha$. %Letting $\alpha$ tend to $2$, we find that $\theta^\circ \ge \theta_2$. 
The upper bound follows  from Theorem~\ref{TH:sdpub}.
\end{proof}

The gap between $\theta^\circ$ and $\theta^*$ in Corollary~\ref{COR:minimax} indicates that the price to pay for using  randomized polynomial time tests for the sparse detection problem is essentially of order $\sqrt{k}$. 

\medskip

\noindent\textbf{Acknowledgments:} Philippe Rigollet is partially supported by the National Science Foundation grants DMS-0906424 and DMS-1053987. Quentin Berthet is partially supported by a Gordon S. Wu fellowship.

\bibliographystyle{amsalpha}
\bibliography{sparsedetectionqb2}

\appendix

\section{Technical lemmas}

\begin{lemma}
\label{LEM:lkmH0}
For all $\fP_0 \in \cD_0$, and $t>0$, it holds
$$\fP_0\Big(\lkm(\Sh) > 1 + 4\sqrt{\frac tn} + t \frac tn \Big) \le \Big(\frac{ed}{k} \Big)^k 9^k e^{-t}\, .$$
\end{lemma}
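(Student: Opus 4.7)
The plan is to bound $\lkm(\Sh)$ by combining an $\varepsilon$-net discretization of the sparse unit sphere with the pointwise sub-exponential concentration inequality \eqref{EQ:H0hpb} and a union bound.

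First, I decompose according to the support:
$$\lkm(\Sh) - 1 \;=\; \max_{|S|=k}\;\max_{u \in \cS_S^{d-1}} u^\top(\Sh - I_d)u.$$
For each fixed $S$ with $|S|=k$, the set $\cS_S^{d-1}$ is the unit sphere of a $k$-dimensional subspace and hence admits a $\tfrac14$-net $N_S$ with $|N_S|\le 9^k$, by the standard volumetric bound $(1+2/\varepsilon)^k$ at $\varepsilon=1/4$. The usual net-approximation argument gives, for any symmetric matrix $M$, that $\|M\|_{\mathrm{op}} \le 2\max_{u\in N_S}|u^\top M u|$. Applied to $M$ equal to $\Sh - I_d$ restricted to rows and columns in $S$, this yields
$$\max_{u \in \cS_S^{d-1}} u^\top(\Sh - I_d)u \;\le\; 2\max_{u \in N_S}\bigl|\hvar(u)-1\bigr|.$$

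Next, fix any $u \in N_S \subset \cS^{d-1}$. Since $\fP_0 \in \cD_0$, inequality~\eqref{EQ:H0hpb} furnishes a pointwise tail of the form
$$\fP_0^{\otimes n}\Bigl(\bigl|\hvar(u) - 1\bigr| > 4\sqrt{s/n} + 4s/n\Bigr) \;\le\; e^{-s}$$
for every $s>0$. Taking a union bound over the $\binom{d}{k}\le(ed/k)^k$ supports of size $k$, and over the $9^k$ points in the corresponding net $N_S$ for each, produces the combinatorial prefactor $(ed/k)^k\,9^k$ in front of the exponential tail. Pairing this with the net inequality and rescaling $s$ so that $2\bigl(4\sqrt{s/n}+4s/n\bigr)$ matches the threshold in the lemma recovers the stated bound.

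The only slightly delicate step is tracking the factor $2$ in the net-approximation step; it follows from $|u^\top M u - u_0^\top M u_0| \le 2\|u-u_0\|\,\|M\|_{\mathrm{op}}$ for unit vectors $u, u_0$, together with $\|u-u_0\|\le 1/4$, which gives $\|M\|_{\mathrm{op}}(1-2\cdot\tfrac14)\le\max_{N_S}|u_0^\top M u_0|$. Everything else is a textbook covering-number argument and presents no genuine obstacle.
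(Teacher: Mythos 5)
Your argument follows the paper's proof essentially step for step: decompose over the $\binom{d}{k}\le (ed/k)^k$ supports, take a $\tfrac14$-net of size at most $9^k$ on each $k$-dimensional sphere, use the factor-$2$ net-approximation lemma for symmetric matrices, apply the pointwise tail from the definition of $\cD_0$, and close with a union bound. One small remark: as you half-notice in your final rescaling step, combining the factor $2$ from the net with the $4\sqrt{s/n}+4s/n$ tail in \eqref{EQ:H0hpb} naturally produces a threshold $8\sqrt{t/n}+8t/n$, not $4\sqrt{t/n}+4t/n$, so matching the stated threshold would cost a constant in the exponent ($e^{-t/4}$ rather than $e^{-t}$); the paper's own proof has the same imprecision (it applies $\cD_0$ at a $2\sqrt{t/n}+2t/n$ threshold), so this is an inherited constant-tracking issue rather than a flaw in your reasoning.
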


\begin{proof}
We define the following events, for all $S \subset \{1,\ldots,d\}$, $u \in \R^p$, and $t>0$
\begin{eqnarray*}
\cA &=&\Big\{\lkm(\Sh) \ge 1 + 4 \sqrt{\frac tn} + 4 \frac tn \Big\} \\
\cA_S &=& \Big\{\lambda_{\max}(\Sh_S) \ge 1 + 4 \sqrt{\frac tn} + 4 \frac tn \Big\}\\
\cA_u &=& \Big\{u^\top \Sh u \ge 1 + 2 \sqrt{\frac tn} + 2 \frac tn \Big\}\, .
\end{eqnarray*}
By union on all sets of cardinal $k$, it holds 
$$\cA \subset \bigcup_{|S| =k}\cA_S\, .$$
Furthermore, let $\cN_S$, be a minimal covering $1/4$-net of $\cS^S$, the set of unit vectors with support included in $S$. It is a classical result that $|\cN_S| \le 9^k$ as shown in \cite{Vers10} and that it holds
$$\lambda_{\max}(\Sh_S-I_S) \leq 2\max_{u \in \cN_S} u^\top (\Sh-I_p) u\, .$$
Therefore it holds
$$\cA_S \subset \bigcup_{u \in \cN_S} \cA_u\, .$$
Hence, by union bound
$$\fP_0(\cA) \le \sum_{|S| =k}  \sum_{u \in \cN_S} \fP_0(\cA_u) \, .$$
By definition of $\cD_0$, $\fP_0(A_u) \le e^{-t}$ for $|u|_2=1$. The classical inequality ${d \choose k} \le \big(\frac{ed}{k} \big)^k$ yields the desired result. 
\end{proof}

\begin{lemma}
\label{LEM:SDPH1}
For all $\fP_0 \in \cD_0$, and $\del>0$, it holds
$$\fP_0\Big(\SDP_k(\Sh) \le 1+2 \sqrt{\frac{ k^2\log(4d^2/ \delta)}{n}}+ 2\frac{k \log(4d^2/ \delta)}{n}+ 2\sqrt{\frac{ \log(2d/\delta)}{n}} + 2\frac{ \log(2d/\delta)}{n}  \Big) \ge  1-\del\, .$$
\end{lemma}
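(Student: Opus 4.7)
The plan is to bound $\SDP_k(\Sh)$ by first reducing to a deterministic bound in terms of the maximum absolute entry of $\Sh - I$ on and off the diagonal, then invoke the concentration hypothesis~\eqref{EQ:H0hpb} with a union bound over a small set of unit vectors.

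First, I would rewrite
\[
\SDP_k(\Sh) \;=\; 1 + \max_{Z \in \cF} \tr\bigl((\Sh - I) Z\bigr),
\]
where $\cF=\{Z\succeq 0:\tr(Z)=1,\ |Z|_1\le k\}$, and split the inner product into diagonal and off-diagonal contributions:
\[
\tr\bigl((\Sh-I)Z\bigr) \;=\; \sum_{i=1}^d (\Sh_{ii}-1)Z_{ii} \;+\; \sum_{i\neq j}\Sh_{ij}Z_{ij}.
\]
Since $Z\succeq 0$ forces $Z_{ii}\ge 0$ and $\tr(Z)=1$, the first sum is at most $\max_i|\Sh_{ii}-1|$. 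For the second sum, $\sum_{i\neq j}|Z_{ij}|\le |Z|_1\le k$ gives the uniform bound $k\max_{i\neq j}|\Sh_{ij}|$. Thus deterministically
\[
\SDP_k(\Sh)-1 \;\le\; \max_i|\Sh_{ii}-1|\;+\;k\,\max_{i\neq j}|\Sh_{ij}|.
\]

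Next I would obtain high-probability bounds on each of these two maxima via~\eqref{EQ:H0hpb}. For the diagonal, apply~\eqref{EQ:H0hpb} to $u=e_i$ with $\nu=\delta/(2d)$ and union over the $d$ canonical vectors: with probability at least $1-\delta/2$, $\max_i|\Sh_{ii}-1|$ is controlled by the stated $\sqrt{\log(2d/\delta)/n}+\log(2d/\delta)/n$ term. For the off-diagonal entries, I would use the polarization identity
\[
\Sh_{ij} \;=\; \tfrac12\bigl(\hvar(u_{ij}^+)-\hvar(u_{ij}^-)\bigr),\qquad u_{ij}^\pm=\tfrac{e_i\pm e_j}{\sqrt 2},
\]
noting that $\var(u_{ij}^\pm)=1$ under $\fP_0\in\cD_0$ since each $u_{ij}^\pm$ is a unit vector. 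Applying~\eqref{EQ:H0hpb} with $\nu=\delta/(4d^2)$ and a union bound over the $d(d-1)$ vectors $\{u_{ij}^\pm\}_{i\ne j}$ yields, with probability at least $1-\delta/2$, the uniform bound $\max_{i\neq j}|\Sh_{ij}|\le C\sqrt{\log(4d^2/\delta)/n}+C\log(4d^2/\delta)/n$ for some small absolute constant $C$, matching (up to constants) what is required to produce the stated $2\sqrt{k^2\log(4d^2/\delta)/n}+2k\log(4d^2/\delta)/n$ term after multiplying by $k$.

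Combining the two events via a union bound gives a total probability of failure at most $\delta$, and adding the two deterministic upper bounds yields the claim. The only nontrivial step is the polarization trick coupled with the union bound over $O(d^2)$ directions for the off-diagonal entries (the diagonal part and the deterministic splitting of $\tr((\Sh-I)Z)$ are routine); tracking constants carefully enough to match the prefactors in the statement is a bookkeeping issue rather than a genuine obstacle.
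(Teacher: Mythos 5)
Your proof is correct and reaches the same two quantities to control (maximum diagonal deviation and $k$ times the maximum off-diagonal entry), but you get there by a different route than the paper. The paper invokes the dual formulation $\SDP_k(\Sh) = \min_{U}\{\lambda_{\max}(\Sh+U) + k|U|_\infty\}$ from \cite{BacAhidAs10,BerRig12} and plugs in $U = -\Ph$ (the negative of the off-diagonal part), obtaining $\SDP_k(\Sh) \le |\Dh|_\infty + k|\Ph|_\infty$. You instead work directly with the primal: writing $\tr(\Sh Z) = 1 + \tr((\Sh - I)Z)$, splitting the trace into diagonal and off-diagonal sums, and using $Z_{ii}\ge 0$, $\tr(Z)=1$, and $\sum_{i\neq j}|Z_{ij}|\le |Z|_1\le k$. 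This is slightly more elementary---it does not require knowing or stating the SDP dual---and produces essentially the same deterministic bound ($|\Dh|_\infty \le 1+\max_i|\Sh_{ii}-1|$, and $|\Ph|_\infty = \max_{i\neq j}|\Sh_{ij}|$). The concentration step (polarization identity for $\Sh_{ij}$ followed by union bounds over the canonical vectors and the $O(d^2)$ polarization directions) is the same in both proofs. Your deferral of constant-tracking is reasonable, though to match the stated prefactors exactly you would need to note that $|\Sh_{ij}| = \tfrac12|(\hvar(u_{ij}^+)-1)-(\hvar(u_{ij}^-)-1)|\le \max_\pm|\hvar(u_{ij}^\pm)-1|$ rather than bounding it by the sum, which loses a factor of two.
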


\begin{proof}
We decompose $\Sh$ as the sum of its diagonal and off-diagonal matrices, respectively $\Dh$ and $\Ph$. Taking $U=-\Ph$ in the dual formulation of the semidefinite program \cite{BacAhidAs10, BerRig12} yields
\begin{equation}
\label{EQ:mindual}
\SDP_k(\Sh) =\min_{U \in \Sy^d} \big\{\lambda_{\max}(\Sh+U) + k |U|_{\infty} \big\}\le |\Dh|_{\infty} + k |\Ph|_{\infty}\, .
\end{equation}

We first control the largest off-diagonal element of $\Sh$ by bounding $|\hat{\Psi}|_{\infty}$ with high probability.
For every $i\neq j$, we have
\begin{eqnarray*}
\hat{\Psi}_{ij}&=&\frac{1}{2} \Big[ \frac 1n \sum_{\ell=1}^n [\frac{1}{2}(X_{\ell, i}+X_{\ell, j})^2 -1] - \frac 1n \sum_{\ell=1}^n [\frac{1}{2}(X_{\ell, i}-X_{\ell, j})^2 -1]\Big]\\
&=&\frac{1}{2} \Big[ \frac 1n \sum_{\ell=1}^n \Big[\Big(\frac{e_i^\top+e_j^\top}{\sqrt{2}}X_\ell\Big)^2 -1\Big] - \frac 1n \sum_{k=1}^n \Big[\Big(\frac{e_i^\top-e_j^\top}{\sqrt{2}}X_\ell\Big)^2 -1\Big]\Big]\, .
\end{eqnarray*}
By definition of $\cD_0$, it holds for $t>0$ that
\[
\fP_0 \Big(|\hat{\Psi}_{ij}| \geq 2 \,\sqrt{\frac{t}{n}}+ 2\frac{t}{n}\Big) \leq 4e^{-t} \, .
\]
Hence, by union bound on the off-diagonal terms, we get
\[
\fP_0 \Big( \max_{i<j}|\hat \Psi_{ij}| \geq 2 \,\sqrt{\frac{t}{n}}+ 2\frac{t}{n} \Big) \leq 2d^2e^{- t} \, .
\]
Taking $t= \log(4p^2/\delta)$ yields that under $\fP_0$ with probability $1-\delta/2$,  
\begin{equation}
\label{EQ:offdiag}
|\hat{\Psi}|_{\infty}\le 2 \sqrt{\frac{ \log(4d^2/ \delta)}{n}}+ 2\frac{ \log(4d^2/ \delta)}{n} \, .
\end{equation}

We control the largest diagonal element of $\Sh$ as follows.
We have by definition of $\hat{\Delta}$, for all~$i$
\[
\hat{\Delta}_{ii} = \frac{1}{n} \sum_{\ell=1}^n (e_i^\top X_{\ell})^2 \, .
\]
Similarly, by union bound over the $p$ diagonal terms, it holds
\begin{equation*}
\fP_0 \Big(|\Dh|_{\infty} \geq 1+2\sqrt{\frac{t}{n}} + 2\frac{t}{n}\Big) \leq d \, e^{-t} \, . 
\end{equation*}
Taking $t=  \log(2p/\delta) $ yields, under $\fP_0$ with probability $1-\delta/2$,
\begin{equation}
\label{EQ:diag}
|\Dh|_{\infty} \leq 1+ 2\sqrt{\frac{ \log(2d/\delta)}{n}} + 2\frac{ \log(2d/\delta)}{n} \, .
\end{equation} 
The desired result is obtained by plugging~\eqref{EQ:offdiag} and~\eqref{EQ:diag} into~\eqref{EQ:mindual}. 
\end{proof}

\end{document}